\documentclass[11pt]{article}
\usepackage{amsfonts}
\usepackage[leqno]{amsmath}
\usepackage{graphicx}
\usepackage{hyperref}
\hypersetup{
	colorlinks=true,
	linkcolor=blue,
	filecolor=magenta,      
	urlcolor=blue,
	citecolor=red
}
\usepackage{latexsym}
\usepackage{amsmath,amsfonts,amssymb,amsthm,mathrsfs,euscript,makeidx,color}
\usepackage{enumerate}
\allowdisplaybreaks[1]
\usepackage{multirow}

\oddsidemargin  = 0pt \evensidemargin = 0pt \marginparwidth = 1in
\marginparsep   = 0pt \leftmargin     = 1.25in \topmargin =0pt
\headheight     = 0pt \headsep        = 0pt \topskip =0pt
\footskip       =0.25in \textheight     = 9in \textwidth      =
6.5in

\def\sqr#1#2{{\vcenter{\vbox{\hrule height.#2pt
				\hbox{\vrule width.#2pt height#1pt \kern#1pt \vrule width.#2pt}
				\hrule height.#2pt}}}}

\def\3n{\negthinspace \negthinspace \negthinspace }
\def\2n{\negthinspace \negthinspace }
\def\1n{\negthinspace }
\def\bel{\begin{equation}\label}
	\def\eel{\end{equation}}

\def\dbE{\mathbb{E}}
\def\dbF{\mathbb{F}}

\def\dbH{\mathbb{H}}

\def\dbN{\mathbb{N}}

\def\dbP{\mathbb{P}}

\def\dbR{\mathbb{R}}
\def\dbS{\mathbb{S}}

\def\sU{\mathscr{U}}

\def\sX{\mathscr{X}}


\def\={\buildrel \triangle \over =}

%
%

\def\d{\delta}

\def\l{\lambda}
\def\m{\mu}

\def\si{\sigma}

\def\th{\theta}

\def\i{\infty}
%
%

\def\Th{\Theta}

\def\O{\Omega}

%

%

\def\cB{{\cal B}}

\def\cF{{\cal F}}

\def\cJ{{\cal J}}

\def\cL{{\cal L}}

\def\cP{{\cal P}}
\def\cQ{{\cal Q}}
\def\cR{{\cal R}}
\def\cS{{\cal S}}

\def\cW{{\cal W}}

%
%

\def\BP{{\bf P}}

\def\BX{{\bf X}}

\def\Bp{{\bf p}}

\def\Bu{{\bf u}}

%
%


\def\Th{\Theta}

\def\O{\Omega}

\def\ms{\medskip}

\def\q{\quad}
\def\qq{\qquad}
\def\hb{\hbox}

%
%
\def\limsup{\mathop{\overline{\rm lim}}}

\def\lan{{\langle}}
\def\ran{{\rangle}}

\def\h{\widehat}
\def\wt{\widetilde}

\def\cd{\cdot}
\def\cds{\cdots}

\def\les{\leqslant}
\def\ges{\geqslant}

\def\({\Big (}
\def\){\Big )}
\def\[{\Big[}
\def\]{\Big]}
\def\lan{\langle}
\def\ran{\rangle}

\def\bde{\begin{definition}\label}
	\def\ede{\end{definition}}

	\def\bel{\begin{equation}\label}
		\def\ee{\end{equation}}
	\def\bt{\begin{theorem}\label}
		\def\et{\end{theorem}}
	\def\bc{\begin{corollary}\label}
		\def\ec{\end{corollary}}
	\def\bl{\begin{lemma}\label}
		\def\el{\end{lemma}}
	\def\bp{\begin{proposition}\label}
		\def\ep{\end{proposition}}
	\def\bex{\begin{example}\label}
		\def\ex{\end{example}}
	\def\bas{\begin{assumption}}
		\def\eas{\end{assumption}}
	\def\br{\begin{remark}\label}
		\def\er{\end{remark}}
	\def\ba{\begin{array}}
		\def\ea{\end{array}}
	\def\ed{\end{document}}

\def\square#1{\vbox{\hrule\hbox{\vrule height#1%
			\kern#1\vrule}\hrule}}
\def\rectangle#1#2{\vbox{\hrule\hbox{\vrule height#1%
			\kern#2\vrule}\hrule}}


\font\tenbb=msbm10 \font\sevenbb=msbm7 \font\fivebb=msbm5

\newfam\bbfam
\scriptscriptfont\bbfam=\fivebb \textfont\bbfam=\tenbb
\scriptfont\bbfam=\sevenbb



\newtheorem{theorem}{Theorem}[section]
\newtheorem{corollary}[theorem]{Corollary}

\newtheorem{lemma}[theorem]{Lemma}
\newtheorem{proposition}[theorem]{Proposition}
\newtheorem{assumption}[theorem]{Assumption}

\theoremstyle{definition}
\newtheorem{definition}[theorem]{Definition}

\newtheorem{remark}[theorem]{Remark}

\newtheorem{example}{Example}[section]

\makeatletter

\@addtoreset{equation}{section}
\makeatother

%
\usepackage{xcolor}
\makeatletter

\input pdf-trans
\newbox\qbox
\def\usecolor#1{\csname\string\color@#1\endcsname\space}
\newcommand\bordercolor[1]{\colsplit{1}{#1}}
\newcommand\fillcolor[1]{\colsplit{0}{#1}}
\newcommand\outline[1]{\leavevmode%
	\def\maltext{#1}%
	\setbox\qbox=\hbox{\maltext}%
	\boxgs{Q q 2 Tr \thickness\space w \fillcol\space \bordercol\space}{}%
	\copy\qbox%
}
\makeatother
\newcommand\colsplit[2]{\colorlet{tmpcolor}{#2}\edef\tmp{\usecolor{tmpcolor}}%
	\def\tmpB{}\expandafter\colsplithelp\tmp\relax%
	\ifnum0=#1\relax\edef\fillcol{\tmpB}\else\edef\bordercol{\tmpC}\fi}
\def\colsplithelp#1#2 #3\relax{%
	\edef\tmpB{\tmpB#1#2 }%
	\ifnum `#1>`9\relax\def\tmpC{#3}\else\colsplithelp#3\relax\fi
}
\bordercolor{black}
\fillcolor{white}
\def\thickness{.3}

\def\hTh{\outline{$\Theta$}}

\title{Ergodicity and turnpike properties of linear-quadratic mean field control problems}

\author{Erhan Bayraktar 
\thanks{Department of Mathematics, University of Michigan, Ann Arbor, MI 48109, USA. {\tt erhan@umich.edu.}; E.B. is funded in part by National Science Foundation under grant DMS-2507940 and in part by the Susan M. Smtih Professorship.}
~~~~Jiamin Jian
\thanks{Department of Mathematics, University of Michigan, Ann Arbor, MI 48109, USA. {\tt jiaminj@umich.edu.}}
}
 
 \date{}

\begin{document}

\maketitle


\begin{abstract}
We study the asymptotic behavior of solutions to linear-quadratic mean field stochastic optimal control problems. By formulating an ergodic control framework, we characterize the convergence between the finite-horizon control problem and its ergodic counterpart.
Leveraging these convergence results, we establish the turnpike property for the optimal pairs, demonstrating that solutions to the finite-horizon control problem remain exponentially close to the ergodic equilibrium except near the temporal boundaries. 
This result reveals the intrinsic connection between long-term dynamics and their asymptotic behavior in mean field control systems.
\end{abstract}


\section{Introduction}

\subsection{Finite-horizon mean field control problem}

Let $\{W(t)\}_{t \ges 0}$ be a one-dimensional standard Brownian motion defined on a complete filtered probability space $(\Omega, \mathcal F, \dbF,\dbP)$ satisfying the usual conditions, with $\dbF=\{\mathcal F_t\}_{t\ges0}$ being the natural filtration of $W(\cd)$ augmented by all the $\dbP$-null sets in $\mathcal F$. Let $\dbR^n$ represent the standard $n$-dimensional real Euclidean space. For any given $n, m \in \dbN$, we denote by $\dbR^{n \times m}$ the Euclidean space of all $n \times m$ real matrices. Consider the following $n$-dimensional controlled linear mean field stochastic differential equation (MF-SDE):
\begin{equation}
\label{eq:state}
\begin{cases}
\vspace{4pt}
\displaystyle
d X(t) = \big\{AX(t) + \bar A \dbE[X(t)] + Bu(t) + b \big\} dt \\
\vspace{4pt}
\displaystyle
\hspace{0.8in} + \big\{CX(t) + \bar C \dbE[X(t)] + Du(t) + \sigma \big\} d W(t), \quad t \ges 0, \\
X(0) = x,
\end{cases}
\end{equation}
where $A, \bar A, C, \bar C \in \dbR^{n \times n}$ and $B, D \in \dbR^{n \times m}$ are called the coefficients of the system, and $b, \sigma \in \dbR^n$ are called the non-homogeneous terms. In the above, $X(\cd)$ is the state process, which takes values in $\dbR^n$, and $u(\cdot)$ is the control process, valued in $\dbR^m$. We refer to MF-SDE \eqref{eq:state} as the state equation.

To proceed, we introduce the following spaces for the control processes. Let $T>0$ denote a large time horizon and
\begin{equation*}
\begin{aligned}
&\sU[0,T] = \Big\{u:[0,T] \times \O \to \dbR^m \Bigm| u(\cd) \hb{ is $\dbF$-progressively measurable, and} \\
&\hspace{2.5in} \dbE \Big[\int_0^T|u(t)|^2dt \Big]<\i\Big\}, \\
& \sU_{loc}[0, \infty) = \bigcap_{T > 0} \sU[0, T], \\
& \sU[0, \infty) := L^2_{\dbF}(0, \infty; \mathbb R^m) = \Big\{u \in \sU_{loc}[0, \infty) \Bigm| \dbE \Big[\int_0^\infty |u(t)|^2dt \Big]<\i\Big\}.
\end{aligned}
\end{equation*}
In a similar manner, we define the following spaces for the state processes to facilitate the analysis:
\begin{equation*}
\begin{aligned}
&\sX[0,T] = \Big\{X:[0,T] \times \O \to \dbR^n \Bigm| X(\cd) \hb{ is $\dbF$-adapted, $t \mapsto X(t, \omega)$ is continuous, and} \\
&\hspace{2.5in} \dbE\Big[\sup_{t \in [0, T]} |X(t)|^2 \Big ] < \i \Big\}, \\
& \sX_{loc}[0, \infty) = \bigcap_{T > 0} \sX[0, T], \\
& \sX[0, \infty) = \Big\{X \in \sX_{loc}[0, \infty) \Bigm| \dbE \Big[\int_0^\infty |X(t)|^2dt \Big]<\i\Big\}.
\end{aligned}
\end{equation*}
It is clear that under some mild conditions, for any initial state $x \in \dbR^n$ and any admissible control $u(\cd) \in \sU[0, T]$, the state equation \eqref{eq:state} admits a unique solution
$X(\cd) = X(\cd \ ; x, u(\cd)) \in \sX[0, T]$.

In what follows, we denote $\dbS^n$ ($\dbS^n_{+}$, $\dbS^n_{++}$)
be the collection of $n \times n$
real symmetric (positive semi-definite, positive definite) matrices. Moreover, for matrices $M, N \in \dbS^n$, we write $M \ges N$ (respectively, $M > N$) if $M - N$ is positive semi-definite (respectively, positive definite).
To evaluate the performance of the control, we introduce the following mean field quadratic cost functional:
\begin{equation}
\label{eq:cost_finite}
J_{T}(x; u(\cd)) = \dbE \Big[\int_0^T f(X(t), \dbE[X(t)], u(t)) dt \Big],
\end{equation}
where
\begin{equation*}
\begin{aligned}
f(x, \bar x, u) = \left\lan \begin{pmatrix}
Q & S^\top \\ S & R
\end{pmatrix} \begin{pmatrix}
x \\ u
\end{pmatrix}, \begin{pmatrix}
x \\ u
\end{pmatrix} \right\ran + 2 \left\lan \begin{pmatrix}
q \\ r
\end{pmatrix}, \begin{pmatrix}
x \\ u
\end{pmatrix}\right\ran + \left\lan 
\bar Q \bar x, \bar x \right\ran,
\end{aligned}
\end{equation*}
with $Q, \bar Q \in \dbS^{n}$, $S \in \dbR^{m \times n}$, $R \in \dbS^{m}$ are constant matrices, and $q \in \dbR^n, r \in \dbR^m$ are constant vectors. In the above, $\lan \cd, \cd \ran$ denotes the inner product of two vectors, while the superscript $\top$ represents the transpose of matrices. Moreover, we denote by $|\cdot|$ the Euclidean norm in the corresponding Euclidean space, and by $\|A\| = \sqrt{\text{trace}(A A^\top)}$ the Frobenius norm of a matrix $A$, where $\text{trace}(\cd)$ is the trace operator. The finite-horizon stochastic linear-quadratic (LQ) mean field optimal control problem is formulated as follows.

\ms

{\bf Problem (MFLQ)$^T$.} For a given initial state $x\in \dbR^n$, find an open-loop optimal control $u_T(\cd) \in\sU[0,T]$ such that
\begin{equation}
\label{eq:MFLQ_finite}
V_T(x) := J_T(x; u_T(\cd)) = \inf_{u(\cd) \in \sU[0, T]} J_T(x; u(\cd)).
\end{equation}

If there exists a control $u_T(\cd) \in\sU[0,T]$ satisfying \eqref{eq:MFLQ_finite}, we say that Problem (MFLQ)$^T$ is open-loop solvable, and $u_T(\cd)$ is referred to as an open-loop optimal control. The corresponding open-loop optimal state process is denoted by $X_T(\cd):= X(\cd\,; x, u_T(\cd))$. The function $V_T(\cd)$ is called the value function of Problem (MFLQ)$^T$, and $(X_T(\cd), u_T(\cd))$ is referred to as an open-loop optimal pair.

\ms

The homogeneous version of Problem (MFLQ)$^T$ (i.e., the case of $b = \sigma = q = 0$ and $r = 0$) was initially studied by \cite{Yong13} under the standard condition in LQ theory, in which the coefficients of the state equation and the weighting matrices of the cost functional are allowed to be time-dependent. The results of \cite{Yong13} were subsequently extended by \cite{Sun17} and \cite{LSY16} to uniformly convex cost functionals with non-homogeneous terms, and by \cite{HLY15} to the infinite-horizon case. Building on this foundation, numerous researchers have explored the mean field LQ problem from various perspectives. For instance, \cite{BP19} employed a weak martingale approach to study the mean field LQ problem; \cite{LLY20} introduced the concept of relaxed compensator to address indefinite problems; and \cite{Qi20} investigated a mean field LQ problem for stochastic evolution equations. Furthermore, \cite{Pham-Wei-2018} studied the stochastic optimal control problem for mean field SDEs, established the dynamic programming principle in its general form, and presented applications to the linear-quadratic mean field control problem.

It is well-established that, under suitable conditions, Problem (MFLQ)$^T$ (even when the coefficients $b(\cd), \sigma(\cd), q(\cd), r(\cd)$ are allowed to be random) admits a unique open-loop optimal control $u_T(\cd) \in \sU[0, T]$. Moreover, this optimal control admits a closed-loop representation through the solution of a system of two Riccati differential equations. For further details, we refer the reader to \cite{Sun-Yong-2020}.

\subsection{Ergodic control problem}
\label{s:ergodic_control_problem_intro}

A natural question arises: how does the problem evolve when the time horizon is extended to the interval $[0, \infty)$? In particular, we now consider the same state equation \eqref{eq:state} with the following cost functional:
\begin{equation}
\label{eq:cost_infinity}
J_{\infty}(x; u(\cd)) = \dbE \Big[\int_0^{\infty} f(X(t), \dbE[X(t)], u(t)) dt \Big].
\end{equation}
Such mean field control problem in infinite horizon has been studied in \cite{HLY15} under the homogeneous case (i.e., the coefficients $b, \sigma, q, r$ are zero) with some stabilizability assumption for the corresponding homogeneous system. It was further investigated in \cite{Bayraktar-Zhang-2023} through the solvability of the associated infinite-horizon McKean-Vlasov forward-backward SDEs. More recently, the long-term average reward control problem for McKean-Vlasov dynamics was analyzed in \cite{Fuhrman-Ruda-2025}, where both the ergodic value $\lambda$ and an associated function $\phi$ were constructed, with $\phi$ serving as a viscosity solution to an ergodic Hamilton-Jacobi-Bellman equation of elliptic type on the Wasserstein space.

Now, in the case where $b, \sigma, q, r$ are constant vectors, and not all are zero, the global integrability condition of the state process and running cost functional is not satisfied, i.e., $X(t; x, u(\cd)) \notin \sX[0, \infty)$. Consequently, even if the corresponding homogeneous system is stabilizable, the state process $X(t; x, u(\cd))$ and thus the running cost functional will not approach to zero as $t \to \infty$. As a result, the cost functional in \eqref{eq:cost_infinity} will generally not be well-defined. Therefore, the corresponding LQ mean field control problem is not properly formulated in the traditional sense.

However, under some stabilizability condition about the homogeneous state process, as in \cite{Fuhrman-Ruda-2025}, we can introduce the following so-called Cas\`aro mean type cost functional:
\begin{equation}
\label{eq:Cesaro_cost_function}
\bar J_T(x; u(\cd)) = \frac{1}{T} J_T(x; u(\cd)) = \frac{1}{T} \dbE \Big[\int_0^T f(X(t), \dbE[X(t)], u(t)) dt \Big],
\end{equation}
where $J_{T}(x; u(\cd))$ is given by \eqref{eq:cost_finite}. Then, we define the ergodic cost as
\begin{equation}
\label{eq:ergodic_cost}
\bar c := \lim_{T \to\infty} \frac{1}{T} V_{T}(x) = \lim_{T \to \infty} \bar J_T(x; u_T(\cd)),
\end{equation}
where $V_T(x)$ is the value function of Problem (MFLQ)$^T$. Moreover, we now consider the following ergodic control problem:

\ms

{\bf Problem (EC).} For a given initial state $x\in \dbR^n$, find $(U(\cd), c)$ and $\bar u(\cd) \in \sU$ such that
\begin{equation}
\label{eq:ergodic_control_problem}
\begin{aligned}
& c := \inf_{u(\cd) \in \sU} \limsup_{T \to \infty} \frac{1}{T} \int_0^T \dbE[f(X(t), \dbE[X(t)], u(t))] dt, \\
& U(x) := \wt{J}_{\infty}(x; \bar{u}(\cd)) = \inf_{u(\cd) \in \sU} \wt{J}_{\infty} (x; u(\cd)),
\end{aligned}
\end{equation}
where 
$$\wt{J}_{\infty}(x; u(\cd)) = \limsup_{T \to \infty} \int_0^T \dbE[f(X(t), \dbE[X(t)], u(t)) - c] dt,$$
and $\sU$ is a proper class of admissible control to be defined later.

\ms

If $\bar u(\cd) \in \sU$ exists, it is called an optimal control, and the associated state process $\bar X(\cd) := \bar X(\cd \,; x, \bar u(\cd))$ is called an optimal state. The function $U(\cd)$ is referred to as the value function of Problem (EC), while the pair $(\bar X(\cd), \bar u(\cd))$ is termed an optimal pair. A solution to Problem (EC) is defined as a 4-tuple $\{U(\cd), c, \bar X(\cd), \bar u(\cd) \}$.

\ms

The above ergodic control problem \eqref{eq:ergodic_control_problem} is referred to as the probabilistic cell problem in \cite{Jian-Jin-Song-Yong-2024}, as it provides a probabilistic interpretation of the so-called cell problem related to the homogenization of the Hamilton-Jacobi equation (see \cite{Tran2021}). Moreover, the ergodic control problem with an Abel mean-type cost functional over the infinite horizon $[0, \infty)$ in the LQ setting has been studied in \cite{Mei-Wei-Yong-2021}, under the assumption that the homogeneous state equation is stabilizable.

In this paper, we establish the solvability of Problem (EC) and investigate the turnpike properties that connect the optimal pair $(X_T(\cd), u_T(\cd))$ of Problem (MFLQ)$^T$ to the optimal pair $(\bar X(\cd), \bar u(\cd))$ of Problem (EC). In the following subsections, we present a concise overview to turnpike properties and then summarize our main contributions. 

\subsection{Turnpike properties}

A notable characteristic of (deterministic) optimal control problems over long time horizons is the tendency of the optimal pair to closely approximate that of a related static optimization problem under certain conditions. This phenomenon, known as the turnpike property, derives its name from the U.S. highway system and is closely tied to the ergodic control problem. The turnpike property was first observed by Ramsey in 1928 while studying economic growth problems over an infinite horizon \cite{Ramsey1928}. It was further developed in 1945 by von Neumann \cite{Neumann1945}, and the terminology ``turnpike property" was introduced in 1958 by Dorfman, Samuelson, and Solow \cite{Dorfman-Samuelson-Solow-1958}. Since then, the turnpike phenomenon has attracted considerable attention in both finite and infinite-dimensional problems, in the context of deterministic discrete-time and continuous-time systems. For more on this, see the relevant papers \cite{McKenzie1976, Zaslavski2011, Porretta-Zuazua-2013, Trelat-Zuazua-2015, Lou-Wang-2019, Breiten-Pfeiffer-2020, Esteve-Kouhkouh-Pighin-Zuazua-2022}, as well as the surveys in \cite{Carlson-Haurie-Leizariwitz-2012, Zaslavski2005, Zaslavski2019}. 

In the deterministic case (when $\bar A = C = \bar C = 0$, $D = 0$, $\sigma = 0$), the turnpike property is rigorously established in \cite{Porretta-Zuazua-2013}. Specifically, it is shown that there exist some absolute constants $\l$ and $K>0$ such that
\begin{equation*}
|X_T(t)-x^*| + |u_T(t)-u^*|\les K \big(e^{-\l t}+e^{-\l(T-t)} \big),\quad  \forall t \in [0,T],
\end{equation*}
where $(x^*,u^*) \in \dbR^n \times \dbR^m$ is a pair solving the following static optimization problem
\begin{equation*}
\begin{cases}
\vspace{4pt}
\displaystyle \hb{minimize } L(x,u) := \lan Qx, x\ran + 2\lan Sx,u\ran + \lan Ru,u\ran + 2\lan q,x\ran + 2\lan r,u\ran, \\
\hb{subject to } Ax+Bu+b=0.
\end{cases}
\end{equation*}
This result further implies that for all $\d \in (0, 1/2)$ and $t \in [\d T, (1-\d)T]$,
\begin{equation*}
|X_T(t) - x^*| + |u_T(t) - u^*| \les 2 K e^{- \l \d T}.
\end{equation*}
Thus, when $T$ is large, the open-loop optimal pair $(X_T(\cd), u_T(\cd))$ can be well approximated by $(x^*,u^*)$ over the majority of the time horizon $[0, T]$. 

A recent paper by Sun, Wang, and Yong (see \cite{Sun-Wang-Yong-2022}) was the first to address the turnpike property for stochastic LQ optimal control problems. Subsequently, Sun and Yong extended these results in \cite{Sun-Yong-2024}, establishing the turnpike property for mean field linear stochastic systems under the assumption that the homogeneous state equation is stabilizable.
Furthermore, they also explored turnpike properties for stochastic LQ optimal control problems with periodic coefficients in \cite{Sun-Yong-2024-periodic}. Specifically, it was shown in \cite{Sun-Yong-2024} (Theorem 3.2) that there exist some constants $K, \l > 0$ such that
\begin{equation}
\label{eq:turnpike_stochastic}
\dbE\big[|X_T(t)-\BX^*(t)|^2+|u_T(t)-\Bu^*(t)|^2\big]\les K \big(e^{-\l t}+e^{-\l(T-t)} \big), \quad \forall t \in [0,T],
\end{equation}
where $\BX^*(\cd)$ and $\Bu^*(\cd)$ are two constructed stochastic processes independent of $T$, satisfying
$$\dbE[\BX^*(t)]=x^*, \quad \dbE[\Bu^*(t)]=u^*,$$
with $(x^*,u^*)$ being the solution of the following static optimization problem:
\begin{equation}
\label{eq:static_optimization_problem}
\begin{cases}
\vspace{4pt}
\displaystyle \hb{minimize } L(x,u) := \lan (Q + \bar Q) x, x\ran + 2\lan S x, u \ran + \lan R u, u \ran + 2\lan q,x\ran + 2\lan r,u\ran \\
\vspace{4pt}
\displaystyle
\hspace{1.6in} + \lan P((C + \bar C) x + D u + \si), (C + \bar C) x + D u + \si \ran, \\
\displaystyle
\hb{subject to } (A + \bar A) x + B u + b = 0.
\end{cases}
\end{equation}
Here, $P$ is the unique positive definite solution to the algebraic Riccati equation
\begin{equation}
\label{eq:ARE_P}
\begin{aligned}
& PA+A^\top P+C^\top PC+Q\\
& \hspace{0.5in} -(PB+C^\top PD+S^\top)(R+D^\top PD)^{-1}(PB+C^\top PD+S^\top)^\top=0.
\end{aligned}
\end{equation}
The property \eqref{eq:turnpike_stochastic} is referred to as stochastic turnpike property. In addition, by introducing the so-called probability cell problem, the recent paper \cite{Jian-Jin-Song-Yong-2024} establishes a connection between the cell problem and the ergodic cost problem, and reveals the turnpike properties of the LQ stochastic optimal control problems from multiple perspectives.

\subsection{Main contributions and organization of this paper}

The main contributions of this work are summarized as follows:
\ms

$\bullet$ \textit{Turnpike properties via ergodic control}: We establish turnpike properties for Problem (MFLQ)$^T$ by leveraging its ergodic counterpart, Problem (EC). Moreover, we demonstrate a fundamental connection between the Bellman equation \eqref{eq:cell_problem} and the ergodic cost \eqref{eq:ergodic_cost}, showing that the constant arising from the solution to Bellman equation coincides precisely with the ergodic cost. To the best of our knowledge, this is the first result that derives turnpike properties for finite-horizon LQ mean field control problems directly through their associated ergodic control problem.

\ms

$\bullet$ \textit{Solvability and Convergence}: We provide a comprehensive analysis of the solvability to both Problem (MFLQ)$^T$ and Problem (EC), deriving explicit expressions for optimal pairs and value functions. Furthermore, we characterize the convergence between these two problems. Specifically, we prove the convergence of coefficients in the value functions and optimal controls for the finite-horizon and ergodic control problems, which is a critical step in establishing turnpike behavior.

\ms

Our approach differs substantially from the existing literature. Compared with \cite{Jian-Jin-Song-Yong-2024}, we extend the analysis from the classical LQ control framework to the more challenging mean field setting, where new arguments are required to address the system dynamics, the solvability of the master and Bellman equations, and the verification theorem for the ergodic mean field control problem. Relative to \cite{Sun-Yong-2024}, our derivation relies on analytic methods based on the dynamic programming principle. In particular, we formulate both the master equation and the Bellman equation for the control problems under the finite and infinite horizon, and we identify two Riccati systems - each with additional coupled equations - that characterize the optimal pairs and value functions. Most importantly, unlike \cite{Sun-Yong-2024}, where the turnpike property is derived from a homogeneous infinite-horizon control problem and a related static optimization problem, our formulation relies directly on the optimal pair $(\bar{X}(\cd), \bar{u}(\cd))$ of the ergodic control problem. This provides a natural and structurally consistent framework for comparing finite-horizon and ergodic mean field control problems. It is interest to note that the desired processes used to establish turnpike properties in our work allows for arbitrary initial states in $\mathbb R^n$, and we could recover the result in \cite{Sun-Yong-2024} by a special choice of the initial state of $\bar{X}(\cd)$, which is illustrated in Remark \ref{r:comparison}. Moreover, we establish the turnpike property at the level of value functions, see Lemma \ref{l:convergence_of_value_function}. Finally, we provide a verification theorem for the ergodic mean field control problem in terms of the Bellman equation, which not only facilitates the study of the mean field control problem under infinite horizon but also provides a benchmark in the linear-quadratic setting and offers insight into turnpike properties in the general mean field context.

The remainder of the paper is structured as follows: Section \ref{s:Finite_MFLQ} presents the solvability result for Problem (MFLQ)$^T$ using an analytical approach. We develop the associated master equation and Riccati system of differential equations, and then derive the optimal pair and value function for Problem (MFLQ)$^T$. In Section \ref{s:ergodic_control_problem_section}, 
we address the associated ergodic control problem. In particular, Subsection \ref{s:cell_problem} establishes the solvability of the Bellman equation and algebraic Riccati equations, while Subsection \ref{s:ergodic_control_problem} demonstrates that Problem (EC) provides a probabilistic interpretation to the Bellman equation and proves a verification theorem. Finally, in Subsection \ref{s:Estimates}, we derive key estimates linking the coefficient functions arising in Problem (MFLQ)$^T$ to the corresponding constant matrices and vectors in the Bellman equation. Then, leveraging these natural convergence results, we establish the turnpike properties between the optimal pairs of Problem (MFLQ)$^T$ and Problem (EC) in Subsection \ref{s:turnpike_properties}.

\section{Finite-horizon LQ mean field control problem}
\label{s:Finite_MFLQ}

\ms

In this section, we provide the solvability result for Problem (MFLQ)$^T$ by using an analytical approach. Throughout the discussion, we let $\cP_2(\dbR^k)$ denote the Wasserstein space of probability measures $\m$ on $\dbR^k$ satisfying $\int_{\dbR^k}|x|^2d\m(x)<\i$, endowed with the $2$-Wasserstein metric $\cW_2(\cd\,, \cd)$ defined by
$$\cW_2(\m_1,\m_2)=\inf_{\pi\in\Pi(\m_1,\m_2)}\Big(\int_{\dbR^k}\int_{\dbR^k}|x-y|^2
d\pi(x, y)\Big)^{\frac{1}{2}},$$
where $\Pi(\m_1,\m_2)$ is the collection of all probability measures on $\dbR^k\times\dbR^k$ with its marginals agreeing with $\m_1$ and $\m_2$, respectively. In the following subsection, we define the Hamiltonian and present the master equation associated with Problem (MFLQ)$^T$ over the enlarged state space.

\subsection{Hamiltonian and the master equation}

\label{s:Hamiltonian_master_equation}

For simplicity of notation, we denote
\begin{equation*}
\begin{aligned}
\h b(x, \mu, u) &= Ax + \bar A \bar \mu + B u + b, \quad
\h \sigma(x, \mu, u) = Cx + \bar C \bar \mu + D u + \sigma, \\
f(x, \mu, u) &= \lan Qx, x\ran + 2 \lan Sx, u\ran + \lan Ru, u\ran + 2\lan q, x\ran + 2\lan r, u\ran + \lan \bar Q \bar \mu, \bar \mu \ran,
\end{aligned}
\end{equation*}
where $\bar \mu = \int_{\dbR^n} x \mu(dx)$.
Define the Hamiltonian $\dbH: \dbR^{n} \times \cP_2(\dbR^{n}) \times \dbR^{m} \times \dbR^{n} \times \dbS^{n} \to \dbR$ as
\begin{equation*}
\dbH(x, \mu, u, \Bp, \BP) = \lan \h b(x, \mu, u), \Bp \ran + \frac{1}{2} \lan \BP \h \sigma(x, \mu, u), \h \sigma(x, \mu, u) \ran + f(x, \mu, u).
\end{equation*}
Furthermore, we define $H: \dbR^{n} \times \cP_2(\dbR^{n}) \times \dbR^{n} \times \dbS^{n} \to \dbR$ to be the infimum of $\dbH$ with respect to $u$ over $\dbR^m$, i.e.,
$$H(x, \mu, \Bp, \BP) = \inf_{u \in \dbR^m} \dbH(t, x, \mu, u, \Bp, \BP).$$
It follows that
\begin{equation*}
\begin{aligned}
& H(x, \mu, \Bp, \BP) \\
= \ & \lan Ax + \bar A \bar \mu + b, \Bp \ran + \frac{1}{2} \lan \BP(Cx + \bar C \bar \mu + \sigma), Cx + \bar C \bar \mu + \sigma \ran + \lan Qx, x\ran + 2\lan q, x\ran + \lan \bar Q \bar \mu, \bar \mu \ran \\
& \hspace{0.3in} + \inf_{u \in \dbR^m} \Big\{\frac{1}{2} \lan (2R + D^\top \BP D)u, u \ran + \lan u, B^\top \Bp + D^\top \BP(Cx + \bar C \bar \mu + \sigma) + 2Sx + 2r \ran \Big\}.
\end{aligned}
\end{equation*}
If $2R + D^\top \BP D$ is invertible, the minimum of $H$ is attained by choosing the function $\h u_T: \dbR^{n} \times \cP_2(\dbR^{n}) \times \dbR^{n} \times \dbS^{n} \to \dbR^{m}$ as following
\begin{equation*}
\h u_T(x, \mu, \Bp, \BP) = - (2R + D^\top \BP D)^{-1} \big[(D^\top \BP C + 2S) x + D^\top \BP \bar C \bar \mu + B^\top \Bp + D^\top \BP \sigma + 2r \big].
\end{equation*}


To establish the solvability for Problem (MFLQ)$^T$, using the approach outlined in Chapter 6 of \cite{Carmona-Delarue-2018}, we consider the value function over the enlarged state space $\dbR^n \times \cP_2(\dbR^n)$:
$$V(t, x, \mu) = \dbE \Big[ \int_t^T f\big(X^{\h u_T}(s), \cL\big(X^{\h u_T}(s) \big), \h u_T(s) \big) ds \Big\vert X^{\h u_T}(t) = x \Big],$$
where $\cL(X^{\h u_T}(s))$ is the marginal
distributions of the state $X^{\h u_T}(s)$, and $\h u_T(\cd)$ minimizes the Hamiltonian $\dbH$ under the constraint $\cL(X^{\h u_T}(t)) = \mu$. Notice that, with this definition, for each $t \in [0, T]$ and $\mu \in \cP_2(\dbR^n)$, $V(t, x, \mu)$ is defined for $\mu$-almost every $x \in \dbR^n$. Define
\begin{equation*}
\begin{aligned}
\bar u_T(t, x, \mu) &= \h u_T \Big(t, x, \mu, D_x V(t, x, \mu) + \int_{\dbR^n} D_{\mu} V(t, x', \mu)(x) d \mu(x'), \\
&\hspace{0.8in} D_x^2 V(t, x, \mu) + \int_{\dbR^n} D_x D_{\mu} V(t, x', \mu)(x) d \mu(x') \Big),
\end{aligned}
\end{equation*}
where we adopt the notion of $L$-derivative w.r.t. the measure variable (see \cite{CDLL19}). The value function $V(\cd, \cd, \cd)$ of Problem (MFLQ)$^T$ satisfies the following master equation
\begin{equation}
\label{eq:master_equation_finite}
\begin{aligned}
& \partial_t V(t, x, \mu) + \h b(x, \mu, \bar u_T(t, x, \mu)) \cdot D_x V(t, x, \mu) + \frac{1}{2} \text{trace} \big(\h \sigma \h \sigma^\top (x, \mu, \bar u_T(t, x, \mu)) D_x^2 V(t, x, \mu)\big)  \\
& \hspace{0.5in} + f(x, \mu, \bar u_T(t, x, \mu)) + \int_{\dbR^n} \h b(x', \mu, \bar u_T(t, x', \mu)) \cdot D_{\mu} V(t, x, \mu)(x') d \mu(x') \\
& \hspace{0.5in} + \frac{1}{2} \int_{\dbR^n} \text{trace} \big(\h \sigma \h \sigma^\top (x', \mu, \bar u_T(t, x', \mu)) D_{x'} D_{\mu} V(t, x, \mu)(x') \big) d \mu(x') = 0
\end{aligned}
\end{equation}
with the terminal condition $V(T, x, \mu) = 0$ for all $(t, x, \mu) \in [0, T] \times \dbR^n \times \cP_2(\dbR^n)$.

Next, we provide a heuristic derivation of the solution to \eqref{eq:master_equation_finite}. We begin with the first ansatz, which assumes that the solution depends not on the entire distribution but only on its mean. Specifically, we assume there exists a function $U: [0, T] \times \dbR^n \times \dbR^n$ such that $V(t, x, \mu) = U(t, x, \bar \mu)$ for all $(t, x, \mu) \in [0, T] \times \dbR^n \times \cP_2(\dbR^n)$ with $\bar \mu = \int_{\dbR^n} x \mu(dx)$. Under this assumption, we have
\begin{equation*}
\partial_t V(t, x, \mu) = \partial_t U(t, x, \bar \mu), \quad D_x V(t, x, \mu) = D_x U(t, x, \bar \mu), \quad
D^2_x V(t, x, \mu) = D^2_x U(t, x, \bar \mu),
\end{equation*}
and 
$$D_{\mu} V(t, x, \mu) = D_{\bar \mu} U(t, x, \bar \mu) D_{\mu} G(\mu, x') = D_{\bar \mu} U(t, x, \bar \mu),$$
where we use the fact that if $G(\mu) = \bar \mu$, then
$$\frac{\delta G}{\delta \mu}(\mu, x') = x', \hbox{ and } D_{\mu} G(\mu, x') =1.$$
For simplicity, we refer to the third variable of $U(\cd, \cd, \cd)$ as $\bar x$ instead of $\bar \mu$. Moreover, by abuse of notation, we continue to use $V(\cd, \cd, \cd)$ to represent the solution to the master equation \eqref{eq:master_equation_finite}. Then, for all $(t, x, \bar x) \in [0, T] \times \dbR^n \times \dbR^n$, the master equation \eqref{eq:master_equation_finite} reduces to
\begin{equation*}
\begin{aligned}
& \partial_t V(t, x, \bar x) + \h b(x, \bar x, \bar u_T(t, x, \bar x)) \cdot D_x V(t, x, \bar x) + \frac{1}{2} \text{trace} \big(\h \sigma \h \sigma^\top (x, \bar x, \bar u_T(t, x, \bar x)) D_x^2 V(t, x, \bar x)\big)  \\
& \hspace{0.5in} + f(x, \bar x, \bar u_T(t, x, \bar x)) + \int_{\dbR^n} \h b(x', \bar x, \bar u_T(t, x', \bar x)) \cdot D_{\bar x} V(t, x, \bar x)(x') d \mu(x') \\
& \hspace{0.5in} + \frac{1}{2} \int_{\dbR^n} \text{trace} \big(\h \sigma \h \sigma^\top (x', \bar x, \bar u_T(t, x', \bar x)) D_{x'} D_{\bar x} V(t, x, \bar x)(x') \big) d \mu(x') = 0.
\end{aligned}
\end{equation*}
It is wroth noting that $D_{\bar x} V(t, x, \bar x)$ does not depend on $x'$, thus the differential equation \eqref{eq:master_equation_finite} simplifies further to
\begin{equation}
\label{eq:master_equation_reduced}
\begin{cases}
\vspace{4pt}
\displaystyle \partial_t V(t, x, \bar x) + \h b(x, \bar x, \bar u_T(t, x, \bar x)) \cdot D_x V(t, x, \bar x) + \frac{1}{2} \text{trace} \big(\h \sigma \h \sigma^\top (x, \bar x, \bar u_T(t, x, \bar x)) D_x^2 V(t, x, \bar x) \big) \\
\vspace{4pt}
\displaystyle \hspace{0.5in} + f(x, \bar x, \bar u_T(t, x, \bar x)) + \int_{\dbR^n} \h b(x', \bar x, \bar u_T(t, x', \bar x)) \cdot D_{\bar x} V(t, x, \bar x) d \mu(x') = 0, \\
V(T, x, \bar x) = 0.
\end{cases}
\end{equation}

\subsection{Riccati system and solvability of Problem (MFLQ)$^T$}

In the previous sections, we have formulated Problem (MFLQ)$^T$ and derived the master equation \eqref{eq:master_equation_reduced} associated with Problem (MFLQ)$^T$. We now define the notion of closed-loop solvability for Problem (MFLQ)$^T$ and proceed to establish its solvability.

\ms

Let $\hTh[0,T]=L^\i(0,T;\dbR^{m\times n})$. For any $(\Th(\cd), \bar \Th(\cd), \theta(\cd)) \in \hTh[0,T] \times \hTh[0,T] \times \sU[0,T]$, let
$$u(t) = \Th(t) (X(t) - \dbE[X(t)]) + \bar \Th(t) \dbE[X(t)] + \theta(t), \quad t \in [0,T],$$
with $X(\cd) := X(\cd\,; x,\Th(\cd), \bar \Th(\cd), \theta(\cd))$ being the solution to the following closed-loop system
\begin{equation}
\label{eq:closed-loop-finite}
\begin{cases}
\vspace{4pt}
\displaystyle
d X(t) = \big\{(A+B\Theta(t)) X(t) + \big[\bar A + B(\bar \Theta(t) - \Theta(t)) \big] \dbE[X(t)] + B \theta(t) + b \big\} dt \\
\vspace{4pt}
\displaystyle
\hspace{0.7in} + \big\{(C+D\Theta(t)) X(t) + \big[\bar C + D(\bar \Theta(t) - \Theta(t))\big] \dbE[X(t)] + D \theta(t) + \sigma \big\} dW(t), \quad t \ges 0, \\
X(0) = x,
\end{cases}
\end{equation}
We adopt the following notation:
$$J_T(x;\Th(\cd), \bar \Th(\cd), \theta(\cd)) = J_T \big(x; \Th(\cd)(X(\cd) - \dbE[X(\cd)]) + \bar \Th(\cd) \dbE[X(\cd)] + \theta(\cd)\big).$$
A tuple $(\Th_T^{*}(\cd), \bar \Th_{T}^{*} (\cd), \th_T^{*}(\cd))$ is called a closed-loop optimal strategy if it satisfies
$$J_T(x; \Th_T^{*}(\cd), \bar \Th_{T}^{*} (\cd), \th_T^{*}(\cd)) \les J_T(x; \Th(\cd), \bar \Th(\cd), \theta(\cd))$$
for all $(\Th(\cd), \bar \Th(\cd), \theta(\cd)) \in \hTh[0,T] \times \hTh[0,T] \times \sU[0,T]$ and $x \in \dbR^n.$
When such a tuple $(\Th_T^{*}(\cd), \bar \Th_{T}^{*} (\cd), \th_T^{*}(\cd))$ exists, we say that Problem (MFLQ)$^T$ is closed-loop solvable on $[0,T]$.

\ms

Next, we investigate the solvability of the Problem (MFLQ)$^T$. Let $C^1([0,T]; \dbR^{n\times m})$ denote the space of $\dbR^{n\times m}$-valued continuous functions with continuous first order derivative on $[0,T]$. Our second ansatz assumes that the solution $V(\cd, \cd, \cd)$ to the master equation \eqref{eq:master_equation_reduced} takes the following form:
\begin{equation}
\label{eq:value_function_finite}
\begin{aligned}
V(t, x, \bar x) &= \lan P(t)(x - \bar x), (x - \bar x) \ran + \lan \Pi(t) \bar x, \bar x\ran + 2\lan P_1(t) \bar x, x - \bar x\ran \\
& \hspace{0.5in} + 2\lan p(t), \bar x\ran + 2 \lan p_1(t), x - \bar x\ran + p_0(t),
\end{aligned}
\end{equation}
where $P(\cd), \ \Pi(\cd), \ P_1(\cd) \in C^1([0, T]; \dbS^n)$, $p(\cd), \ p_1(\cd) \in C^1([0, T], \dbR^n)$ and $p_0(\cd) \in C^1([0, T]; \dbR)$. From this ansatz and by calculating the corresponding derivatives, 
the feedback form of the optimal control is given as following
\begin{equation*}
\begin{aligned}
\bar u_T(t, x, \bar x) &= - (R + D^\top P(t) D)^{-1} (B^\top P(t) + D^\top P(t) C + S) (x - \bar x) \\
& \hspace{0.3in} - (R + D^\top P(t) D)^{-1} (B^\top \Pi(t) + D^\top P(t) C + D^\top P(t) \bar C + S) \bar x \\
& \hspace{0.3in} - (R + D^\top P(t) D)^{-1} (B^\top p(t) + D^\top P(t) \sigma + r).
\end{aligned}
\end{equation*}

For simplicity of notation, we denote
\begin{equation*}
\begin{aligned}
& \h A = A + \bar A, \quad \h C = C + \bar C, \quad \h Q = Q + \bar Q.
\end{aligned}
\end{equation*}
Moreover, for $P, \Pi \in \dbS^n$, we define the following terms
\begin{equation*}
\begin{aligned}
& \mathcal{Q}(P) = P A+A^\top P + C^\top P C + Q, \quad \h{\mathcal Q}(P, \Pi) = \Pi \h A+\h A^\top \Pi + \h C^\top P \h C + \h Q, \\
& \mathcal S(P) = B^\top P + D^\top P C+ S, \quad \h{\mathcal S}(P, \Pi) = B^\top \Pi + D^\top P \h C + S, \\
& \mathcal R(P) = R + D^\top P D.
\end{aligned}
\end{equation*}
Then, for all $(t, x, \bar x) \in [0, T] \times \dbR^n \times \dbR^n$, the feedback form of optimal control is expressed as
\begin{equation*}
\bar u_T(t, x, \bar x) = \Th_T^{*}(t) (x - \bar x) + \bar \Th_T^{*}(t) \bar x + \theta_T^{*}(t),
\end{equation*}
where the coefficients are given by
\begin{equation}
\label{eq:theta_T_star}
\begin{aligned}
& \Th_T^{*}(t) = - \cR(P(t))^{-1} \cS(P(t)), \\
& \bar \Th_T^{*}(t) = - \cR(P(t))^{-1} \h \cS(P(t), \Pi(t)), \\
& \theta_T^{*}(t) = - \cR(P(t))^{-1} (B^\top p(t) + D^\top P(t) \sigma + r).
\end{aligned}
\end{equation}

With the ansatz of solution in \eqref{eq:value_function_finite} and the above results, by isolating the terms associated with $x - \bar x$ and $\bar x$, the master equation \eqref{eq:master_equation_reduced} leads to
the following system of Riccati equations:
\begin{equation}
\label{eq:Riccati_ODE}
\begin{cases}
\vspace{4pt}
\displaystyle
\dot{P}(t) + \cQ(P(t)) - \cS(P(t))^\top \cR(P(t))^{-1} \cS(P(t)) = 0, \\
\vspace{4pt}
\displaystyle
\dot{\Pi}(t) + \h \cQ(P(t), \Pi(t)) - \h \cS(P(t), \Pi(t))^\top \cR(P(t))^{-1} \h \cS(P(t), \Pi(t)) = 0, \\
\vspace{4pt}
\displaystyle
\dot{P}_1(t) + [A + B^\top \Th_T^*(t)]^\top P_1(t) + P_1(t)[A + \bar A + B\bar \Th_T^*(t)] + C^\top P(t) (C + \bar C) \\
\vspace{4pt}
\displaystyle
\hspace{0.5in} + Q - P(t) B \bar \Th_T^*(t) + \Th_T^*(t)^\top [D^\top P(t) C + D^\top P(t) \bar C + S] = 0, \\
\vspace{4pt}
\displaystyle
\dot{p}(t) + [A + \bar A + B\bar \Th_T^*(t)]^\top p(t) + [C + \bar C + D \bar \Th_T^*(t)]^\top P(t) \sigma \\
\hspace{0.5in} + \bar \Th_T^*(t)^\top  r + \Pi(t) b + q = 0, \\
\vspace{4pt}
\displaystyle
\dot{p}_1(t) + [A + B \Th_T^*(t)]^\top p_1(t) + P(t)b + C^\top P(t) \sigma + q + (P_1(t) - P(t)) (B\theta_T^*(t) + b) \\
\vspace{4pt}
\displaystyle
\hspace{0.5in} + \bar \Th_T^*(t)^\top [D^\top P(t) \sigma + r] = 0, \\
\vspace{4pt}
\displaystyle
\dot{p}_0(t) - [B^\top p(t) + D^\top P(t) \sigma + r]^\top (R + D^\top P(t) D)^{-1} [B^\top p(t) + D^\top P(t) \sigma + r] \\
\displaystyle
\hspace{0.5in} + 2 p(t)^\top b + \sigma^\top P(t) \sigma = 0,
\end{cases}
\end{equation}
with the terminal conditions
$$P(T) = \Pi(T) = P_1(T) = 0, \quad p(T) = p_1(T) = 0, \quad p_0(T) = 0.$$

\begin{remark}
\textit{The first two equations for $P(\cd)$ and $\Pi(\cd)$ in the system \eqref{eq:Riccati_ODE} were also derived in \cite{Yong13} and \cite{Sun-Yong-2024} via the stochastic maximum principle. In contrast, our analysis is based on the dynamic programming principle and incorporates additional equations that jointly characterize the optimal control and the value function for the finite-horizon mean field control problem. This extended formulation enables a direct comparison with the algebraic Riccati system arising in the ergodic control problem and plays a crucial role in establishing the turnpike property for both the optimal pair and the value function.
}
\end{remark}

Next, to ensure the solvability of Riccati system of equations \eqref{eq:Riccati_ODE} and of Problem (MFLQ)$^T$, we impose the following hypothesis. 

\ms 

{\bf(H1)} \textit{The matrices $Q, \bar Q \in \dbS^n$ and $R \in \dbS^m_{++}$ satisfy that
$$Q - S^\top R^{-1} S \in \dbS^{n}_{++}, \quad Q + \bar Q - S^\top R^{-1} S \in \dbS^{n}_{++}.$$
}

For Problem (MFLQ)$^T$, we present the following results. Throughout, $I_n$ denotes the $n \times n$ identity matrix.

\begin{theorem}
\label{t:finite_time_control} 
Let {\rm(H1)} hold. Then

\ms

{\rm(i)} The system of Riccati differential equations \eqref{eq:Riccati_ODE} for $\{P(t), \Pi(t), P_1(t), p(t), p_1(t), p_0(t): t \in [0, T]\}$ admits a unique solution such that $P(t) \ges 0$ and $\Pi(t) \ges 0$ for all $t \in [0, T]$. Furthermore, $P(\cd)$ satisfies the property that
\begin{equation*}
\mathcal R (P(t)) = R + D^\top P(t) D \ges \d I_m, \quad \forall t \in [0, T],
\end{equation*}
for some uniform constant $\d>0$.

{\rm(ii)} For each initial $x \in \dbR^n$, Problem (MFLQ)$^T$ admits a unique open-loop optimal control $u_T(\cd)$ with the following closed-loop representation:
\begin{equation}
\label{eq:optimal_control_finite}
u_T(t) = \Th_T^*(t) (X_T(t) -\dbE[X_T(t)]) + \bar \Th_T^*(t) \dbE[X_T(t)] + \th_T^*(t), \quad t\in[0,T],
\end{equation}
where $X_T(\cd):= X(\cd\,; x, \Th_T^*(\cd), \bar \Th_T^*(\cd), \th_T^*(\cd))$ is the solution to the corresponding closed-loop system (similar to \eqref{eq:closed-loop-finite}), with
the tuple $(\Th_T^{*}(\cd), \bar \Th_{T}^{*} (\cd), \th_T^{*}(\cd))$ is given by \eqref{eq:theta_T_star}. 

\ms

{\rm(iii)} The value function is expressed as
\begin{equation*}
\begin{aligned}
V_T(x) &= \lan \Pi(0) x, x \ran + 2 \lan p(0), x \ran + p_0(0) \\
&= \lan \Pi(0) x, x \ran + 2 \lan p(0), x \ran + \int_0^T \big[\lan P(t) \sigma, \sigma \ran + 2 \lan p(t), b \ran  - \lan \mathcal R (P(t)) \theta_T^*(t), \theta_T^*(t) \ran \big] dt,
\end{aligned}
\end{equation*}
where $\{P(t), \Pi(t), p(t), p_0(t): t \in [0, T]\}$ is the solution to Riccati system of equations \eqref{eq:Riccati_ODE}, and $\theta_T^*(\cd)$ is defined in \eqref{eq:theta_T_star}.
\end{theorem}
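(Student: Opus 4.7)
The plan is to solve the Riccati system \eqref{eq:Riccati_ODE} in cascade using classical results for symmetric matrix Riccati equations, and then run a verification-by-It\^o argument based on the ansatz \eqref{eq:value_function_finite} to prove optimality and recover the explicit value function in one stroke.

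\emph{Step 1: Solvability of \eqref{eq:Riccati_ODE}.} The matrix equation for $P(\cd)$ is the classical symmetric Riccati equation of stochastic LQ theory. Under (H1), which assumes $R\in\dbS^m_{++}$ and $Q-S^\top R^{-1}S\in\dbS^n_{++}$, standard existence and uniqueness results (e.g.\ \cite{Sun-Yong-2020, Sun17}) yield a unique $P(\cd)\in C^1([0,T];\dbS^n)$ with $P(t)\ges 0$ on $[0,T]$; since $R>0$, the bound $\cR(P(t)) = R + D^\top P(t) D \ges \lambda_{\min}(R) I_m$ is immediate, and we may take $\delta=\lambda_{\min}(R)$. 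With $P(\cd)$ in hand, the equation for $\Pi(\cd)$ is a second symmetric Riccati equation whose ``running cost'' is $\h C^\top P\h C + Q + \bar Q$; the assumption $Q+\bar Q-S^\top R^{-1}S\in\dbS^n_{++}$ supplies the strict positivity required to reapply the same classical argument and yields a unique $\Pi(\cd)\in C^1([0,T];\dbS^n)$ with $\Pi(t)\ges 0$. Once $(P,\Pi)$ are obtained, the feedback matrices $\Th_T^*(\cd), \bar\Th_T^*(\cd), \theta_T^*(\cd)$ from \eqref{eq:theta_T_star} are bounded and continuous on $[0,T]$, so the remaining equations for $P_1,\, p,\, p_1,\, p_0$ are linear ODEs with continuous coefficients and zero terminal data, each of which admits a unique global solution.

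\emph{Step 2: Verification.} Fix $x\in\dbR^n$ and $u(\cd)\in\sU[0,T]$ with corresponding state $X(\cd)$, and define $V(t,x,\bar x)$ by the ansatz \eqref{eq:value_function_finite}. Apply It\^o's formula to $V(t, X(t), \dbE[X(t)])$, integrate on $[0,T]$, and take expectations. Using $V(T,\cd,\cd)=0$ and the fact that $X(0)=x$ is deterministic (so $\dbE[X(0)]=x$ and $X(0)-\dbE[X(0)]=0$), one obtains $V(0,x,x)=\lan\Pi(0)x,x\ran+2\lan p(0),x\ran+p_0(0)$. The Riccati system \eqref{eq:Riccati_ODE} is constructed precisely so that, after regrouping the resulting integrand according to its dependence on $X(t)-\dbE[X(t)]$ and $\dbE[X(t)]$, the following completion-of-squares identity emerges:
\begin{equation*}
J_T(x;u(\cd))=V(0,x,x)+\dbE\Big[\int_0^T\lan\cR(P(t))\,\Delta u(t),\,\Delta u(t)\ran\,dt\Big],
\end{equation*}
where $\Delta u(t):=u(t)-\big[\Th_T^*(t)(X(t)-\dbE[X(t)])+\bar\Th_T^*(t)\dbE[X(t)]+\theta_T^*(t)\big]$. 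Since $\cR(P(t))\ges\delta I_m>0$, the integral is nonnegative and vanishes if and only if $\Delta u\equiv 0$ a.e., which simultaneously proves open-loop optimality of \eqref{eq:optimal_control_finite}, its uniqueness, and the first expression $V_T(x)=\lan\Pi(0)x,x\ran+2\lan p(0),x\ran+p_0(0)$ in (iii). The second expression then follows by integrating the scalar ODE for $p_0$ on $[0,T]$ using $p_0(T)=0$, and observing that the definition of $\theta_T^*$ in \eqref{eq:theta_T_star} gives $\lan\cR(P(t))\theta_T^*(t),\theta_T^*(t)\ran = [B^\top p(t)+D^\top P(t)\sigma+r]^\top\cR(P(t))^{-1}[B^\top p(t)+D^\top P(t)\sigma+r]$.

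\emph{Main obstacle.} The substantive work lies in verifying the completion-of-squares identity in Step 2. Because the ansatz decomposes $V$ along the orthogonal directions $x-\bar x$ and $\bar x$, applying It\^o to $V(t,X(t),\dbE[X(t)])$ produces not only the usual stochastic generator (via $D_xV$ and $D_x^2V$) but also an additional drift contribution from $\tfrac{d}{dt}\dbE[X(t)]$ (via $D_{\bar x}V$). All six ODEs in \eqref{eq:Riccati_ODE} must conspire to cancel, respectively, the $(x-\bar x)$-quadratic, $\bar x$-quadratic, mixed $(x-\bar x)\bar x$, $(x-\bar x)$-linear, $\bar x$-linear, and purely constant contributions that arise; the auxiliary variables $P_1$ and $p_1$ are introduced precisely to absorb the cross terms between the fluctuation and mean components. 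A clean way to organize this bookkeeping is to first subtract the mean dynamics from the full dynamics, analyze the fluctuation $X-\dbE[X]$ and the mean $\dbE[X]$ separately, and only at the end assemble the full identity.
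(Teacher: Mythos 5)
Your proposal is correct and follows essentially the same route as the paper: solve the Riccati system in cascade (classical results for $P$ and $\Pi$ under (H1), then linear ODEs for $P_1, p, p_1, p_0$), and establish (ii)--(iii) by a completion-of-squares verification. The only difference is presentational: the paper delegates the verification step to Theorem 5.2 of \cite{Sun17}, whereas you sketch the It\^o/completion-of-squares identity explicitly (and your single-square form with $\Delta u$ is indeed equivalent to the usual two-square decomposition into fluctuation and mean parts, since the cross term vanishes after taking expectations).
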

\begin{proof}
{\rm(i)} Under assumption (H1), the unique solvability of the system of equations $\{P(t), \Pi(t): t \in [0, T]\}$ in \eqref{eq:Riccati_ODE} follows directly from the results in Section 4 of \cite{Yong13} or Lemma 2.1 in \cite{Sun-Yong-2024}. Moreover, we have $P(t)$, $\Pi(t) \in \dbS_{+}^{n}$ and $\cR(P(t)) \ges \delta I_m$ for all $t \in [0, T]$ with some $\delta > 0$. With the given $P(\cd)$, $\Pi(\cd) \in C^1([0, T]; \dbS^n)$, the
remaining equations for $\{P_1(t), p(t), p_1(t), p_0(t): t \in [0, T]\}$ in \eqref{eq:Riccati_ODE} form a linear system, thus its existence and uniqueness are guaranteed by the standard theory of linear ODEs. 
\ms

{\rm(ii)} The results follow directly from Theorem 5.2 in \cite{Sun17}.
\ms

{\rm(iii)} By the definition of the value function for (MFLQ)$^T$ in \eqref{eq:MFLQ_finite} and employing arguments analogous to those in Theorem 5.2 of \cite{Sun17}, we obtain $V_T(x) = V(0, x, x) = \lan \Pi(0) x, x \ran + 2 \lan p(0), x \ran + p_0(0)$ since $X_{T}(0) = x$ is a constant state. Here, $V(\cd, \cd, \cd)$ in \eqref{eq:value_function_finite} solves the master equation \eqref{eq:master_equation_reduced}, and $\{\Pi(t), p(t), p_0(t): t \in [0, T]\}$ is the solution to Riccati system of equations \eqref{eq:Riccati_ODE}.
\end{proof}



\section{Ergodic control problem}
\label{s:ergodic_control_problem_section}

In the previous section, we introduced the finite-time horizon LQ mean field control problem and demonstrated the solvability of Problem (MFLQ)$^T$. A natural extension of this problem is to consider the case where the time horizon is infinite, i.e., the interval $[0, \infty)$. Specifically, we now examine the same state equation \eqref{eq:state}, but with the cost functional defined as
\begin{equation*}
J_{\infty}(x; u(\cd)) = \dbE \Big[\int_0^{\infty} f(X(t), \dbE[X(t)], u(t)) dt \Big].
\end{equation*}
Note that, while the non-homogeneous state equation \eqref{eq:state} on $[0, \infty)$ may be solvable if one of $b$, $\sigma$, $q$, or $r$ is non-zero, the cost functional defined above may not be well-defined, regardless of whether the homogeneous system is stabilizable. Consequently, the infinite-time version of the LQ mean field control problem, denoted as (MFLQ)$^\infty$, is not well-posed. 

To address this issue, we turn our attention to the Bellman equation in Subsection \ref{s:cell_problem}, following the approach presented in \cite{Jian-Jin-Song-Yong-2024}, where it is referred to as the cell problem. We then demonstrate that Problem (EC) provides a probabilistic interpretation of the Bellman equation and prove a verification theorem in Subsection \ref{s:ergodic_control_problem}, establishing that the solution to Problem (EC) can be derived from the solution to the Bellman equation.

\subsection{The Bellman equation and algebraic Riccati equations}
\label{s:cell_problem}

Before introducing the Bellman equation, we first consider the homogeneous state equation on the infinite horizon $[0, \i)$ and establish the stabilizability condition for this system.

\subsubsection{Homogeneous state equation and stabilizability condition}

The homogeneous state process, denoted compactly as $[A, \bar A, C, \bar C; B, D]$, is defined as follows ($b = \sigma = 0$, compare with \eqref{eq:state}):
\begin{equation*}
\begin{cases}
\vspace{4pt}
\displaystyle
d X(t) = \{AX(t) + \bar A \dbE[X(t)] + Bu(t) \} dt \\
\vspace{4pt}
\displaystyle
\hspace{0.8in} + \{CX(t) + \bar C \dbE[X(t)] + Du(t) \} d W(t), \quad t \ges 0, \\
X(0) = x.
\end{cases}
\end{equation*}

\begin{definition}
\textit{The homogeneous system $[A, \bar A, C, \bar C; B, D]$ is MF-$L^2$-stabilizable if there exists a pair $(\Theta, \bar \Theta) \in \mathbb R^{m \times n} \times \mathbb R^{m \times n}$, called the MF-$L^2$-stabilizer of $[A, \bar A, C, \bar C; B, D]$, such that if $X(\cdot) := X(\cd \,; x, \Theta, \bar \Theta)$ is the solution to the following homogeneous closed-loop system 
\begin{equation*}
\begin{cases}
\vspace{4pt}
\displaystyle
d X(t) = \big\{(A+B\Theta) X(t) + \left[\bar A + B(\bar \Theta - \Theta)\right] \dbE[X(t)] \big\} dt \\
\vspace{4pt}
\displaystyle
\hspace{0.8in} + \big\{(C+D\Theta) X(t) + \left[\bar C + D(\bar \Theta - \Theta)\right] \dbE[X(t)] \big\} dW(t), \quad t \ges 0, \\
X(0) = x,
\end{cases}
\end{equation*}
and 
\begin{equation*}
u(t) = \Theta (X(t) - \dbE[X(t)]) + \bar \Theta \dbE[X(t)], \quad t \ges 0,
\end{equation*}
then
\begin{equation}
\label{eq:integrability}
\dbE \Big[\int_0^{\infty} \big( |X(t)|^2 + |u(t)|^2 \big) dt \Big] < \infty.
\end{equation}}
\end{definition}

\ms

To ensure that the homogeneous system $[A, \bar A, C, \bar C; B, D]$ is MF-$L^2$-stabilizable, we introduce the following assumption.

\ms 

{\bf(H2)} 
\textit{The controlled ordinary differential equation (ODE)
\begin{equation}
\label{eq:homo_ode}
\dot{X}(t) = (A + \bar A) X(t) + Bu(t), \quad t \ges 0
\end{equation}
is stabilizable, i.e, there exists a matrix $\bar \Th \in \dbR^{m \times n}$ such that all the eigenvalues of $A + \bar A + B \bar \Th$ have negative real parts. In this case, we call $\bar \Th$ a stabilizer of \eqref{eq:homo_ode}. Moreover, the controlled SDE
\begin{equation}
\label{eq:homo_sde}
d X(t) = (AX(t) + Bu(t)) dt + (CX(t) + Du(t)) dW(t), \quad t \ges 0
\end{equation}
is $L^2$-stabilizable, i.e., there exists a matrix $\Th \in \dbR^{m \times n}$ such that for any initial state $x \in \dbR^{n}$, the solution $X(\cd)$ to
\begin{equation*}
\begin{cases}
\vspace{4pt}
\displaystyle
d X(t) = (A + B\Th) X(t) dt + (C+ D \Th)X(t) dW(t), \quad t \ges 0 \\
X(0) = x
\end{cases}
\end{equation*}
satisfies $\dbE[\int_0^\infty |X(t)|^2 dt] < \infty$, i.e., $X(\cd) \in \sX[0, \infty)$.
}
\ms

From the result in \cite{HLY15}, under the assumption (H2), the system $[A, \bar A, C, \bar C; B, D]$ is MF-$L^2$-stabilizable. Consequently, the following set is nonempty
\begin{equation}
\label{eq:u_ad_infinity}
\sU_{ad}[0,\i) := \big\{u(\cd) \in \sU[0,\i) \bigm| X(\cd\,; x, u(\cd)) \in L^2_\dbF(0,\i; \dbR^n) \big\}.
\end{equation}

\subsubsection{The Bellman equation and its solvability}

Similarly, we define the Hamiltonian $\dbH_\i: \dbR^{n} \times \dbR^{n} \times \dbR^{n} \times \dbS^{n} \times \dbR^{m} \to \dbR$ for the ergodic control problem as follows
\begin{equation*}
\begin{aligned}
\dbH_\i(x, \bar x, \Bp, \BP, u) &= \lan Ax + \bar A \bar x + Bu + b, \Bp \ran + \frac{1}{2} \lan \BP(Cx + \bar C \bar x + Du + \sigma), Cx + \bar C \bar x + Du + \sigma \ran \\
& \hspace{0.5in} + \lan Qx, x\ran + 2 \lan Sx, u\ran + \lan Ru, u\ran + 2\lan q, x\ran + 2\lan r, u\ran + \lan \bar Q \bar x, \bar x \ran.
\end{aligned}
\end{equation*}
Additionally, we introduce the infimum of $\dbH_{\infty}$ with respect to $u$ over $\dbR^m$
$$H_\i(x, \bar x, \Bp, \BP) = \inf_{u \in \dbR^m} \dbH(x, \bar x, \Bp, \BP, u).$$
A direct computation then yields the following explicit expression for $H_{\infty}$:
\begin{equation*}
\begin{aligned}
& H_\i(x, \bar x, \Bp, \BP) \\
= \ & \lan Ax + \bar A \bar x + b, \Bp \ran + \frac{1}{2} \lan \BP(Cx + \bar C \bar x + \sigma), Cx + \bar C \bar x + \sigma \ran + \lan Qx, x\ran + 2\lan q, x\ran + \lan \bar Q \bar x, \bar x \ran \\
& \hspace{0.3in} + \inf_{u \in \dbR^m} \Big\{\frac{1}{2} \lan (2R + D^\top \BP D)u, u \ran + \lan u, B^\top \Bp + D^\top \BP(Cx + \bar C \bar x + \sigma) + 2Sx + 2r \ran \Big\}.
\end{aligned}
\end{equation*}
If $2R + D^\top \BP D$ is invertible, the minimum of $H_\i$ is attained by selecting the function $\h u_{\infty}$ as following
$$\h u_{\infty}(x, \bar x, \Bp, \BP) = - (2R + D^\top \BP D)^{-1} \big[(D^\top \BP C + 2S) x + D^\top \BP \bar C \bar x + B^\top \Bp + D^\top \BP \sigma + 2r\big].$$

Next, for the ergodic control problem, following the approach in Subsection \ref{s:Hamiltonian_master_equation}, we consider the following Bellman equation (also referred to as the cell problem in \cite{Tran2021} and \cite{Jian-Jin-Song-Yong-2024}):

\ms

{\bf Problem (C)}. Find a pair $(V(\cd, \cd), c_0) \in C^{2,1}(\dbR^n \times \dbR^n) \times \dbR$ such that
\begin{equation}
\label{eq:cell_problem}
\begin{aligned}
c_0 & = \h b(x, \bar x, \bar u(x, \bar x)) \cdot D_x V(x, \bar x) + \frac{1}{2} \text{trace} \big(\h \sigma \h \sigma^\top (x, \bar x, \bar u(x, \bar x)) D_x^2 V(x, \bar x) \big) \\
& \hspace{0.5in} + f(x, \bar x, \bar u(x, \bar x)) + \int_{\dbR^n} \h b(x', \bar x, \bar u(x', \bar x)) \cdot D_{\bar x} V(x, \bar x) \mu(dx')
\end{aligned}
\end{equation}
holds for all $(x, \bar x) \in \dbR^n \times \dbR^n$, where
\begin{equation*}
\begin{aligned}
\bar u(x, \bar x) &:= \h u_\i \Big(x, \bar x, D_x V(x,\bar x) + \int_{\dbR^n} D_{\bar x} V(x', \bar x) d \mu(x'), D_x^2 V(x, \bar x) \Big).
\end{aligned}
\end{equation*}


Next, we establish the solvability for Problem (C) by leveraging the results for the corresponding system of algebraic Riccati equations. To proceed,  we denote
\begin{equation}
\label{eq:theta_star}
\begin{aligned}
\Th^{*} &= - \cR(P)^{-1} \cS(P), \\
\bar \Th^{*} & = - \cR(P)^{-1} \h \cS(P, \Pi), \\
\theta^{*} &= - \cR(P)^{-1} (B^\top p + D^\top P \sigma + r),
\end{aligned}
\end{equation}
and introduce the following system of algebraic Riccati equations
\begin{equation}
\label{eq:algebraic_Riccati}
\begin{cases}
\vspace{4pt}
\displaystyle
\cQ(P) - \cS(P)^\top \cR(P)^{-1} \cS(P) = 0, \\
\vspace{4pt}
\displaystyle
\h \cQ(P, \Pi) - \h \cS(P, \Pi)^\top \cR(P)^{-1} \h S(P, \Pi) = 0, \\
\vspace{4pt}
\displaystyle
(A + B^\top \Th^*)^\top P_1 + P_1(A + \bar A + B\bar \Th^*) + C^\top P(C + \bar C) + Q - PB \bar \Th^* \\
\vspace{4pt}
\displaystyle
\hspace{0.5in} + (\Th^*)^\top (D^\top P C + D^\top P \bar C + S) = 0, \\
\vspace{4pt}
\displaystyle
(A + \bar A + B\bar \Th^*)^\top p + (C + \bar C + D \bar \Th^*)^\top P \sigma + (\bar \Th^*)^\top  r + \Pi b + q = 0, \\
\vspace{4pt}
\displaystyle
(A + B \Th^*)^\top p_1 + Pb + C^\top P \sigma + q + (P_1 - P) (B\theta^* + b) + (\bar \Th^*)^\top (D^\top P \sigma + r) = 0, \\
2 p^\top b + \sigma^\top P \sigma - (B^\top p + D^\top P \sigma + r)^\top (R + D^\top P D)^{-1} (B^\top p + D^\top P \sigma + r) = c_0.
\end{cases}
\end{equation}

Under assumptions (H1)-(H2), we establish the following results concerning the solvability of the algebraic Riccati system \eqref{eq:algebraic_Riccati} and the Bellman equation \eqref{eq:cell_problem}.

\begin{theorem}
\label{t:infinite_time_homo} 
Let {\rm(H1)-(H2)} hold. Then,

\ms

{\rm(i)} The system of algebraic Riccati equations \eqref{eq:algebraic_Riccati} for $\{P, \Pi, P_1, p, p_1, c_0\}$ admits a unique solution such that $P > 0$ and $\Pi > 0$. Moreover, the matrix 
$$\bar \Th^{*} = - \cR(P)^{-1} \h \cS(P, \Pi)$$
is a stabilizer of \eqref{eq:homo_ode}, and the matrix 
$$\Th^{*} = - \cR(P)^{-1} \cS(P)$$
is a stabilizer of \eqref{eq:homo_sde}.

\ms

{\rm(ii)} The Bellman equation \eqref{eq:cell_problem} admits a solution $(V(\cd, \cd), c_0)$ with $V(x, \bar x)$ taking the following form
\begin{equation}
\label{eq:value_function_cell}
V(x, \bar x) = \lan P(x - \bar x), x - \bar x \ran + \lan \Pi \bar x, \bar x\ran + 2\lan P_1 \bar x, x - \bar x\ran + 2\lan p, \bar x\ran + 2 \lan p_1, x - \bar x\ran,
\end{equation}
where $\{P, \Pi, P_1, p, p_1\}$ is the unique solution to \eqref{eq:algebraic_Riccati}, and
\begin{equation}
\label{eq:c_0}
c_0 = 2 p^\top b + \sigma^\top P \sigma - (B^\top p + D^\top P \sigma + r)^\top (R + D^\top P D)^{-1} (B^\top p + D^\top P \sigma + r).
\end{equation}
\end{theorem}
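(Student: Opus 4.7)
The plan is to exploit the hierarchical structure of system \eqref{eq:algebraic_Riccati}: once the first two (genuinely nonlinear) equations are solved for $P$ and $\Pi$, the remaining three are linear systems whose solvability reduces to the stability of matrices built from $\Theta^*$ and $\bar\Theta^*$, and the scalar equation for $c_0$ is then merely a definition. Part (ii) is a verification by direct substitution of the quadratic ansatz \eqref{eq:value_function_cell} into the Bellman equation \eqref{eq:cell_problem}, using the identities satisfied by the blocks of the Riccati system.

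For part (i), the first equation is the standard stochastic algebraic Riccati equation associated with $[A,C;B,D]$ and cost data $(Q,S,R)$. Under (H1) giving $Q - S^\top R^{-1} S > 0$ together with SDE $L^2$-stabilizability from (H2), classical stochastic LQ theory (cf.\ \cite{Sun-Yong-2020, HLY15}) delivers a unique $P > 0$, and $\Theta^* = -\cR(P)^{-1}\cS(P)$ is an $L^2$-stabilizer of \eqref{eq:homo_sde}. Taking expectations in the corresponding homogeneous closed-loop SDE and using $\int_0^\infty |\dbE[X(t)]|^2 dt \les \int_0^\infty \dbE[|X(t)|^2] dt < \infty$ for arbitrary $x \in \dbR^n$ forces $A + B\Theta^*$ to be Hurwitz. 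The second equation, rewritten with augmented positive-definite cost $\h Q + \h C^\top P \h C$ in place of $Q$, is the algebraic Riccati equation for the deterministic LQ problem on $\dbE[X]$; under $Q + \bar Q - S^\top R^{-1} S > 0$ and ODE stabilizability of $(A + \bar A, B)$ from (H2), it produces a unique $\Pi > 0$, and $\bar\Theta^* = -\cR(P)^{-1}\h\cS(P,\Pi)$ is then a stabilizer of \eqref{eq:homo_ode}. The third equation is a Sylvester equation for $P_1$ of the form $(A + B\Theta^*)^\top P_1 + P_1 (A + \bar A + B\bar\Theta^*) = \text{known}$; since the spectra of $(A + B\Theta^*)^\top$ and $-(A + \bar A + B\bar\Theta^*)$ both lie in the open left half plane, they are disjoint and $P_1$ is uniquely determined. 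The equations for $p$ and $p_1$ are linear systems with coefficient matrices $(A + \bar A + B\bar\Theta^*)^\top$ and $(A + B\Theta^*)^\top$, both invertible by Hurwitz-ness, and finally $c_0$ is defined algebraically by the last line of \eqref{eq:algebraic_Riccati}.

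For part (ii), I would compute the derivatives of the ansatz: $D_x V = 2P(x - \bar x) + 2P_1 \bar x + 2p_1$, $D_x^2 V = 2P$, and $D_{\bar x} V(x, \bar x)(x') = 2(P_1 - P)(x - \bar x) + 2(\Pi - P_1)\bar x + 2(p - p_1)$, independent of $x'$. Hence $D_x V + \int_{\dbR^n} D_{\bar x} V\, d\mu = 2P(x - \bar x) + 2\Pi\bar x + 2p$, so that the minimizer $\bar u(x, \bar x)$ obtained from $\h u_\infty$ coincides with $\Theta^*(x - \bar x) + \bar\Theta^*\bar x + \theta^*$ as in \eqref{eq:theta_star}. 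Substituting into \eqref{eq:cell_problem} yields a polynomial identity in $(x - \bar x, \bar x)$; grouping coefficients by order — quadratic in $x - \bar x$, quadratic in $\bar x$, bilinear in $(x - \bar x)\otimes \bar x$, linear in $x - \bar x$, linear in $\bar x$, and constant — each block vanishes by exactly one equation of \eqref{eq:algebraic_Riccati}, with the constant block producing precisely \eqref{eq:c_0}.

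The main obstacle is the transfer from $L^2$-stabilizability of the SDE, which is what the stochastic Riccati theory directly delivers, to deterministic Hurwitz-ness of $A + B\Theta^*$, which is required for the Sylvester/Lyapunov-type equations for $P_1$, $p$, and $p_1$ to be uniquely solvable; the expectation argument sketched above bridges this gap. The other delicate point is the bilinear $(x - \bar x)(\bar x)$ block in the verification, where several contributions — from the drift, the diffusion cross terms, the running cost, and the $D_{\bar x} V$ integral — must be collected and collapsed into the third Riccati identity using $\cS(P) + \cR(P)\Theta^* = 0$.
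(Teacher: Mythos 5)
Your proposal is correct and follows essentially the same route as the paper: solvability of $P$ and $\Pi$ via the standard stochastic and (augmented) deterministic algebraic Riccati theory under (H1)--(H2), then the Sylvester equation for $P_1$ using the disjoint spectra of $A+B\Th^*$ and $-(A+\bar A+B\bar\Th^*)$, linear solvability of $p$, $p_1$, $c_0$, and finally verification of the Bellman equation by substituting the quadratic ansatz and matching blocks. The only additions beyond the paper's argument are your explicit expectation/Jensen argument deriving Hurwitz-ness of $A+B\Th^*$ from $L^2$-stabilizability (which the paper leaves implicit in its citations) --- a correct and harmless elaboration.
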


\begin{proof}
{\rm(i)} From Theorem 3.4.1 in \cite{Sun-Yong-2020} or Lemma 2.2 in \cite{Sun-Yong-2024}, we obtain the unique solvability of $P$ and $\Pi$ in \eqref{eq:algebraic_Riccati}. Moreover, both $P$ and $\Pi$ are positive definite, and $\bar \Th^*$ and $\Th^*$ serve as the stabilizers of \eqref{eq:homo_ode} and \eqref{eq:homo_sde}, respectively. Thus, the matrices $A + B \Th^*$ and $A + \bar A + B \bar \Th^*$ are stable. With the given $P$ and $\Pi$, the third equation in \eqref{eq:algebraic_Riccati} is a standard Sylvester equation. Since both of the matrices $A + B \Th^*$ and $A + \bar A + B \bar \Th^*$ are Hurwitz, then $A + B \Th^*$ and $-(A + \bar A + B \bar \Th^*)$ have no eigenvalue in common. By Theorem 2.4.4.1 in \cite{Roger-Charles-2012}, the equation for $P_1$ in \eqref{eq:algebraic_Riccati} admits a unique solution. Finally, given $P, \Pi$ and $P_1$, the unique solvability of $\{p, p_1, c_0\}$ follows immediately.
\ms

{\rm(ii)} From the explicit form of $V(\cd, \cd)$ in \eqref{eq:value_function_cell}, we have
$$D_x V(x, \bar x) + \int_{\dbR^n} D_{\bar x}V(x', \bar x) d\mu(x') = 2P(x - \bar x) + 2\Pi \bar x + 2 p.$$
Thus, from \eqref{eq:theta_star}, the feedback form for the optimal control is obtained and it is given by
\begin{equation}
\label{eq:feedback_form}
\bar u(x, \bar x) = \Th^{*} (x - \bar x) + \bar \Th^{*} \bar x + \theta^{*}
\end{equation}
for all $(x, \bar x) \in \dbR^n \times \dbR^n$. Then, with the solution ansatz \eqref{eq:value_function_cell} and the feedback function in \eqref{eq:feedback_form}, the Bellman equation \eqref{eq:cell_problem} can be reduced to
\begin{equation*}
\begin{aligned}
c_0 &= \lan Ax + \bar A \bar x + b, 2 P(x - \bar x) + 2 P_1 \bar x + 2 p_1 \ran + \lan P(Cx + \bar C \bar x + \sigma),  Cx + \bar C \bar x + \sigma \ran \\
& \hspace{0.3in} + \lan Qx, x\ran + 2 \lan q, x\ran + \lan \bar Q \bar x, \bar x\ran \\
& \hspace{0.3in} + \lan \Th^*(x -\bar x) + \bar \Th^* \bar x + \theta^*, 2 B^\top [P(x - \bar x) + P_1 \bar x + p_1] + 2D^\top P (Cx + \bar C \bar x + \sigma) + 2Sx + 2r \ran \\
& \hspace{0.3in} + \lan (R+D^\top P D)[\Th^*(x -\bar x) + \bar \Th^* \bar x + \theta^*], \Th^*(x -\bar x) + \bar \Th^* \bar x + \theta^* \ran \\
& \hspace{0.3in} + \lan (A + \bar A + B \bar \Th^*) \bar x + B \theta^* + b, (2P_1 - 2P) (x - \bar x) + (2\Pi - 2P_1) \bar x + 2 p - 2p_1 \ran. 
\end{aligned}
\end{equation*}
Note that the above equation holds for all $(x, \bar x) \in \dbR^n \times \dbR^n$. By matching terms involving $x - \bar x$ and $\bar x$, we obtain the system of algebraic Riccati equations as \eqref{eq:algebraic_Riccati}. Hence, $(V(\cd, \cd), c_0)$ is a solution to the Bellman equation \eqref{eq:cell_problem}.
\end{proof}

\subsection{Verification theorem to the ergodic control problem}
\label{s:ergodic_control_problem}

We have observed that Problem (MFLQ)$^T$ can be regarded as a probabilistic interpretation of the master equation \eqref{eq:master_equation_reduced}. In this subsection, we aim to establish a similar connection for Problem (C) by demonstrating that Problem (EC) serves as a probabilistic interpretation of the Bellman equation in \eqref{eq:cell_problem}. Specifically, our objective is to delineate the connection between the Problem (C) in \eqref{eq:cell_problem} and the Problem (EC) in \eqref{eq:ergodic_control_problem},
which is referred to the verification procedure in the context of the control theory.

In a standard control problem, the objective is to find an optimal pair
that minimizes the cost functional and thereby determines the value function. 
In contrast, Problem (EC) in \eqref{eq:ergodic_control_problem} inherently involves two objectives: identifying a pair $(\bar X(\cd),\bar u(\cd))$ that minimizes both the long-term average rate and the long-term residual cost simultaneously.

A key distinction in Problem (EC) is that the choice of $u(\cd)$ does not necessarily ensure the existence of $\lim_{T\to\i} J_T(x; u(\cd))$. In other words, the control space $\sU$ is not necessarily a subset of $\sU_{ad}[0,\i)$ (as defined in \eqref{eq:u_ad_infinity}). To address this, we introduce a refined control space $\sU$, which we define as a set
consisting of all $\dbF$-progressively measurable processes such that
\begin{itemize}
\item For all $x \in \dbR^n$, the state process $X(\cd) := X(\cd \,; x, u(\cd))$ of the solution to equation \eqref{eq:state} satisfies, for all $t \ges 0$,
\begin{equation*}
\dbE\big[|X(t)|^4 + |u(t)|^4 \big]< \i.
\end{equation*}

\item The law of $(X(t), u(t))$ converges to some distribution $\mu_\i \in \cP_2(\dbR^n \times \dbR^m)$ in the $2$-Wasserstein metric $\cW_2(\cd, \cd)$, i.e.,
$$\lim_{t\to \i} \cW_2(\cL(X(t), u(t)), \mu_\i) = 0.$$
\end{itemize}

We will construct a $u(\cd) \in \sU$ in Theorem \ref{t:ergodic_control_problem} to demonstrate the non-emptiness of this set. The following lemma serves as a foundational result and will be useful in the subsequent analysis, whose proof can be found in Lemma 5.1 of \cite{Jian-Jin-Song-Yong-2024}. Denote
$$\int_{\dbR^k} \phi(x) \nu(dx) = \lan \phi, \nu \ran,$$
for all continuous function $\phi(\cd)$ valued in $\dbR^k$ and for all $\nu \in \cP_2(\dbR^k)$.   Let $\pi_{\#}$ denote the push-forward measure of the projection $\pi(x, u) = x$, defined as
$$\pi_\#\m(B)=\m(\pi^{-1}(B)),\qq\forall B\in\cB(\dbR^n),\q\m\in\cP_2(\dbR^n \times \dbR^m).$$

\begin{lemma}
\label{l:limit_test_function}
Let $u(\cd) \in \sU$ such that $\cL(X(t), u(t))$ convergent to $\mu_\i$. Then, for any continuous function $\phi: \dbR^n \times \dbR^n \times \dbR^m \to \dbR$ with quadratic growth, i.e.,
$$|\phi(x, \bar x, u)| \les K(1 + |x|^2 + |\bar x|^2 + |u|^2), \quad \forall(x, \bar x, u) \in \dbR^n \times \dbR^n \times \dbR^m,$$
for some $K > 0$, it holds that
$$\lim_{t \to \i} \dbE[\phi(X(t), \dbE[X(t)], u(t))] = \lan \phi(\cd, \lan x, \pi_{\#} \mu_{\i} \ran, \cd), \mu_{\i} \ran.$$
\end{lemma}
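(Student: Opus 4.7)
The plan is to reduce the lemma to the standard characterization of $2$-Wasserstein convergence: $\cW_2(\nu_t, \nu_\i) \to 0$ is equivalent to weak convergence $\nu_t \Rightarrow \nu_\i$ together with convergence of second moments, and is therefore equivalent to $\int \psi \, d\nu_t \to \int \psi \, d\nu_\i$ for every continuous $\psi$ of at most quadratic growth. Once this machinery is invoked, the only wrinkle is that the middle slot of $\phi$ is occupied by the moving quantity $\dbE[X(t)]$, which must be dealt with separately from the joint law $\cL(X(t), u(t))$.

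First I would record two immediate consequences of the hypothesis $\cW_2(\cL(X(t),u(t)), \mu_\i) \to 0$. Since the projection $\pi(x,u)=x$ is $1$-Lipschitz, $\cW_2(\cL(X(t)), \pi_\# \mu_\i) \to 0$, and $\cW_2$-convergence in particular forces convergence of means, so $\dbE[X(t)] \to \bar x_\i := \lan x, \pi_\# \mu_\i \ran$. Moreover, the same characterization yields uniform integrability of $|x|^2 + |u|^2$ under $\cL(X(t), u(t))$, and in particular $\sup_{t\ges 0} \dbE[|X(t)|^2 + |u(t)|^2] < \i$.

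Next I would decompose
\begin{equation*}
\dbE[\phi(X(t), \dbE[X(t)], u(t))] - \lan \phi(\cd, \bar x_\i, \cd), \mu_\i \ran = A_t + B_t,
\end{equation*}
with
\begin{equation*}
A_t := \dbE\big[\phi(X(t), \dbE[X(t)], u(t)) - \phi(X(t), \bar x_\i, u(t))\big], \qquad B_t := \dbE[\phi(X(t), \bar x_\i, u(t))] - \lan \phi(\cd, \bar x_\i, \cd), \mu_\i \ran.
\end{equation*}
The map $(x,u) \mapsto \phi(x, \bar x_\i, u)$ is continuous on $\dbR^n \times \dbR^m$ with at most quadratic growth, so the $\cW_2$-characterization above immediately yields $B_t \to 0$.

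The main obstacle is the residual term $A_t$, and I would handle it by a truncation argument. Fix $R>0$ and split the expectation over $E_R(t) := \{|X(t)| + |u(t)| \les R\}$ and its complement. On $E_R(t)$, the function $\phi$ is uniformly continuous on the compact set where the first and third arguments lie in a ball of radius $R$ and the second argument ranges over a compact neighborhood of $\bar x_\i$ containing $\{\dbE[X(t)]\}_{t \ges t_0}$ for some $t_0$; since $\dbE[X(t)] \to \bar x_\i$, the corresponding contribution to $A_t$ tends to zero as $t \to \i$ for each fixed $R$. On the complement of $E_R(t)$, the quadratic growth bound $|\phi| \les K(1+|x|^2+|\bar x|^2+|u|^2)$ together with the uniform integrability of $|X(t)|^2 + |u(t)|^2$ and the boundedness of $\dbE[X(t)]$ makes this contribution small uniformly in $t$ as $R \to \i$. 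Sending $t \to \i$ first and then $R \to \i$ yields $A_t \to 0$, completing the proof.
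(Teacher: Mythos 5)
Your proof is correct. Note that the paper does not actually prove this lemma in-house: it simply defers to Lemma 5.1 of \cite{Jian-Jin-Song-Yong-2024}, so there is no internal argument to compare against line by line. Your write-up supplies exactly the argument one would expect that reference to contain: the equivalence of $\cW_2$-convergence with convergence of integrals of continuous test functions of quadratic growth, the $1$-Lipschitz projection to transfer convergence to $\cL(X(t))$ and hence to the means $\dbE[X(t)] \to \lan x, \pi_{\#}\mu_\i\ran$, and a truncation/uniform-integrability step to absorb the fact that the middle argument of $\phi$ is the moving deterministic quantity $\dbE[X(t)]$ rather than part of the converging joint law. The decomposition into $A_t$ and $B_t$ and the order of limits ($\limsup_{t\to\i}|A_t| \les \e(R)$ with $\e(R)\to 0$) are handled correctly; the only cosmetic caveat is that the asserted bound $\sup_{t\ges 0}\dbE[|X(t)|^2+|u(t)|^2]<\i$ is only needed (and only cleanly available from the hypothesis) for $t$ large, which is all your argument uses.
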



In the following, we provide a complete characterization of Problem (EC) in the LQ setting, utilizing the solvability result of Problem (C) in Theorem \ref{t:infinite_time_homo}. We first introduce the following SDE, whose solution, denoted by $\bar X(\cd)$, will subsequently be identified as the optimal state for Problem (EC)
\begin{equation}
\label{eq:optimal_path_ergodic}
\begin{cases}
\vspace{4pt}
\displaystyle
d \bar X(t) = \{(A + B \Th^*)(\bar X(t) - \dbE[\bar X(t)]) + (A + \bar A + B \bar \Th^*) \dbE[\bar X(t)] + B \theta^* + b \} dt \\
\vspace{4pt}
\displaystyle
\hspace{0.6in} + \{(C + D \Th^*)(\bar X(t) - \dbE[\bar X(t)]) + (C + \bar C + D \bar \Th^*) \dbE[\bar X(t)] + D \theta^* + \sigma \} dW(t), \\
\bar X(0) = x.
\end{cases}
\end{equation}
For simplicity of notation, we denote
\begin{equation*}
\begin{aligned}
& \wt{A}_1 = A + B \Th^*, \quad \wt{A}_2 = \bar A + B \bar \Th^* - B \Th^*, \quad \wt{b} = B\theta^* + b, \\
& \wt{C}_1 = C + D \Th^*, \quad \wt{C}_2 = \bar C + D \bar \Th^* - D \Th^*, \quad \wt{\sigma} = D\theta^* + \sigma.
\end{aligned}
\end{equation*}
Then, with these notations, the SDE \eqref{eq:optimal_path_ergodic} can equivalently be expressed as
\begin{equation*}
\begin{cases}
\vspace{4pt}
\displaystyle
d \bar X(t) = \big\{ \wt{A}_1 \bar X(t) + \wt{A}_2 \dbE[\bar X(t)] + \wt{b} \big\} dt + \big\{ \wt{C}_1 \bar X(t) + \wt{C}_2 \dbE[\bar X(t)] + \wt{\sigma} \big\} dW(t), \\
\bar X(0) = x.
\end{cases}
\end{equation*}

Next, we present two lemmas that play a key role in establishing the non-emptiness of $\sU$ and proving the verification theorem. In the following, we denote $K$ and $\l$ as two generic positive constants which may vary from line to line. The first lemma demonstrates the boundedness of the moments of $\bar{X}(\cd)$.

\begin{lemma}
\label{l:moment_boundedness}
Let {\rm(H1)--(H2)} hold. Then, there exists a constant $K > 0$, independent of $t$, such that the solution $\bar X(\cd)$ to the SDE \eqref{eq:optimal_path_ergodic} satisfies, for all $t \ges 0$,
$$\dbE \big[|\bar X(t)|^2 \big] \les K, \quad \hbox{and} \quad \dbE \big[|\bar X(t)|^4 \big] < \infty.$$
\end{lemma}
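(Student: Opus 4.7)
The plan is to decompose $\bar X(t) = m(t) + \tilde X(t)$, where $m(t) := \dbE[\bar X(t)]$ and $\tilde X(t) := \bar X(t) - m(t)$, and to treat the mean dynamics and the centered dynamics separately. For the mean, taking expectation in \eqref{eq:optimal_path_ergodic} gives
\begin{equation*}
\dot m(t) = (\wt A_1 + \wt A_2) m(t) + \wt b = (A + \bar A + B\bar\Th^*) m(t) + \wt b,
\end{equation*}
and Theorem \ref{t:infinite_time_homo}(i) asserts that $\bar\Th^*$ is a stabilizer of \eqref{eq:homo_ode}, so $A+\bar A+B\bar\Th^*$ is Hurwitz. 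Standard linear ODE theory then yields $m(t) \to m^* := -(A+\bar A+B\bar\Th^*)^{-1}\wt b$ exponentially, and in particular $\sup_{t\ges 0}|m(t)| \les K$.

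Next, the centered process satisfies the linear SDE
\begin{equation*}
d\tilde X(t) = \wt A_1 \tilde X(t)\,dt + \big[\wt C_1 \tilde X(t) + \beta(t)\big] dW(t), \qquad \tilde X(0) = 0,
\end{equation*}
where $\beta(t) := (\wt C_1 + \wt C_2) m(t) + \wt\sigma$ is uniformly bounded by Step~1. The key observation is that the matrix $P$ from the algebraic Riccati system \eqref{eq:algebraic_Riccati} serves as a stochastic Lyapunov function for $(\wt A_1, \wt C_1) = (A+B\Th^*, C+D\Th^*)$. Indeed, a direct computation using $\Th^* = -\cR(P)^{-1}\cS(P)$ and the first equation in \eqref{eq:algebraic_Riccati} gives the completion-of-squares identity
\begin{equation*}
\wt A_1^\top P + P\wt A_1 + \wt C_1^\top P \wt C_1 = -\big[(Q - S^\top R^{-1}S) + (S + R\Th^*)^\top R^{-1}(S + R\Th^*)\big] =: -M,
\end{equation*}
and hypothesis (H1) ensures $M \ges Q - S^\top R^{-1}S > 0$. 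Applying Itô to $\tilde X(t)^\top P \tilde X(t)$ and using Young's inequality to absorb the cross term $2\tilde X^\top \wt C_1^\top P\beta$, I obtain
\begin{equation*}
\frac{d}{dt}\dbE[\tilde X(t)^\top P\tilde X(t)] \les -\a\,\dbE[|\tilde X(t)|^2] + K|\beta(t)|^2 \les -\a'\,\dbE[\tilde X(t)^\top P\tilde X(t)] + K',
\end{equation*}
for positive constants $\a, \a', K, K'$, using $P \succ 0$. A Gronwall argument then yields a uniform bound $\dbE[|\tilde X(t)|^2] \les K$, and combining with Step~1 produces $\dbE[|\bar X(t)|^2] \les 2|m(t)|^2 + 2\dbE[|\tilde X(t)|^2] \les K$.

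For the fourth moment bound, with $m(\cd)$ now known to be bounded, the SDE for $\tilde X$ is a genuinely linear SDE with uniformly bounded inhomogeneous coefficients and deterministic initial datum. Applying Itô's formula to $|\tilde X(t)|^4$, taking expectation, and using the Burkholder-Davis-Gundy inequality together with the already-established $L^2$ bound to control the mixed terms yields a linear differential inequality $\frac{d}{dt}\dbE[|\tilde X(t)|^4] \les K_1\dbE[|\tilde X(t)|^4] + K_2$, whose Gronwall-type solution shows $\dbE[|\tilde X(t)|^4] < \i$ for each $t \ges 0$; combined with boundedness of $m(t)$ this gives $\dbE[|\bar X(t)|^4] < \i$. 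The main technical obstacle is the Lyapunov identity in Step~2: once the Riccati structure is exploited to produce the negative definite matrix $-M$, the rest is essentially Gronwall bookkeeping.
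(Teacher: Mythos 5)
Your proof is correct, and its overall architecture matches the paper's: first bound $\dbE[\bar X(t)]$ uniformly using that $A+\bar A+B\bar\Th^*$ is Hurwitz, then run a quadratic Lyapunov/It\^o/Gr\"onwall argument for the uniform second-moment bound, and finish the fourth moment with a standard linear-growth estimate. There are two genuine differences in execution. First, the paper applies the Lyapunov argument directly to $\bar X(\cd)$ using the matrix $\bar P\in\dbS^n_{++}$ solving the Lyapunov equation $\bar P\wt A_1+\wt A_1^\top\bar P+\wt C_1^\top\bar P\wt C_1+I_n=0$, whose existence it imports from the $L^2$-exponential stability of $[\wt A_1,\wt C_1]$ via a cited theorem; you instead center the process as $\tilde X=\bar X-\dbE[\bar X]$ and use the Riccati solution $P$ itself, with the completion-of-squares identity $\wt A_1^\top P+P\wt A_1+\wt C_1^\top P\wt C_1=-(Q+S^\top\Th^*+(\Th^*)^\top S+(\Th^*)^\top R\Th^*)$, which is negative definite by (H1). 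That identity is exactly the one the paper uses later in the proof of Theorem \ref{t:turnpike_property}, so your route is self-contained within the paper's own toolbox and avoids the external Lyapunov-equation solvability result; a small bonus of your centering is that the cross term $\dbE[\tilde X^\top\wt C_1^\top P\beta]$ actually vanishes (since $\beta(t)$ is deterministic and $\dbE[\tilde X(t)]=0$), so the Young's inequality step is not even needed. Second, for the fourth moment the paper simply invokes the moment estimate (D.5) of \cite{Fleming-Soner-2006}, whereas you sketch an It\^o/Gr\"onwall derivation; both give the required finiteness (only finiteness for each $t$, not a uniform bound, is claimed). No gaps.
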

\begin{proof}
Firstly, note that
$$d \dbE \big[\bar X(t) \big] = \big\{(A + \bar A + B \bar \Th^*) \dbE \big[\bar X(t) \big] + B \theta^* + b \big\} dt,$$
which yields
$$\dbE \left[\bar X(t) \right] = e^{(A + \bar A + B \bar \Th^*)t} x + \int_0^t e^{(A + \bar A + B \bar \Th^*)(t-s)} (B \theta^* + b) ds.$$
From Theorem \ref{t:infinite_time_homo} (i), $\bar \Th^*$ is a stabilizer of \eqref{eq:homo_ode}, thus the matrix 
$A + \bar A + B \bar \Th^*$ is stable, which implies that there exist some $\lambda > 0$ and $K > 0$ such that
$$\big\| e^{(A + \bar A + B \bar \Th^*)t} \big\| \les K e^{-\lambda t}, \quad \forall t \ges 0.$$
Hence, we have the following estimation
\begin{equation*}
\begin{aligned}
\left\vert \dbE \left[\bar X(t) \right] \right\vert & \les K e^{-\lambda t} + K \int_0^t e^{-\lambda (t - s)} |B \theta^* + b| ds \\
& \les K e^{-\lambda t} + K \big(1- e^{-\lambda t} \big) \les K,
\end{aligned}
\end{equation*}
which gives that, for any $l > 0$,
\begin{equation}
\label{eq:boundedness_expectation}
\left\vert \dbE \left[\bar X(t) \right] \right\vert^l \les K, \quad \forall t \ges 0.
\end{equation}
Then, by using (D.5) of \cite{Fleming-Soner-2006}, for any $l \ges 2$, we derive the following result
\begin{equation*}
\begin{aligned}
\dbE\Big[\sup_{t \in [0, T]} |\bar X(t)|^l \Big] &\les K|x|^l + KT^{\frac{l}{2} - 1} e^{KT} \dbE \Big[\int_0^T \Big\{|x|^l + \left(|\wt{A}_2| + |\wt{b}|\right)^l + \left(|\wt{C}_2| + |\wt{\sigma}|\right)^l \Big\} dt \Big] < \infty.
\end{aligned}
\end{equation*}

Next, from Theorem \ref{t:infinite_time_homo} (i), $\Th^*$ is a stabilizer of \eqref{eq:homo_sde}. Consequently, the homogeneous system $[A + B \Th^*, C+ D\Th^*]$ is $L^2$-exponentially stable. Thus, Theorem 2.2 (iv) in \cite{Jian-Jin-Song-Yong-2024} guarantees that the following Lyapunov equation admits a unique solution $\bar{P} \in \dbS^n_{++}$:
\begin{equation*}
\begin{aligned}
& \bar{P}(A + B \Th^*) + (A + B \Th^*)^\top \bar{P} + (C + D \Th^*)^\top \bar{P} (C + D \Th^*) + I_n \\
= \ & \bar{P} \wt{A}_1 + \wt{A}_1^\top \bar{P} + \wt{C}_1^\top \bar{P} \wt{C}_1 + I_n = 0.
\end{aligned}
\end{equation*}
Let $\beta > 0$ be the largest eigenvalue of $\bar{P}$. Applying It\^o's formula, we obtain
\begin{equation*}
\begin{aligned}
& \frac{d}{dt} \dbE\left[\lan \bar{P} \bar X(t), \bar X(t) \right] \\
= \ & \dbE\Big[\big\lan \big(\bar{P}\wt{A}_1 + \wt{A}_1^\top \bar{P} + \wt{C}_1^\top \bar{P} \wt{C}_1 \big) \bar X(t), \bar X(t) \big\ran \\
& \hspace{0.5in} + \big\lan \big( \bar{P} \wt{A}_2 + \wt{A}_2^\top \bar{P} + \wt{C}_2^\top \bar{P} \wt{C}_2 + \wt{C}_2^\top \bar{P} \wt{C}_1 + \wt{C}_1^\top \bar{P} \wt{C}_2 \big) \dbE[\bar X(t)], \dbE[\bar X(t)] \big\ran \\
& \hspace{0.5in} + \big\lan 2\bar{P}\wt{b} + 2\wt{C}_1^\top \bar{P} \wt{\sigma} + 2 \wt{C}_2^\top \bar{P} \wt{\sigma}, \dbE[\bar X(t)] \big\ran + \lan \bar{P}\wt{\sigma}, \wt{\sigma} \ran \Big] \\
\les \ & - \dbE\left[|\bar X(t)|^2 \right] + K \\
\les \ & - \frac{1}{\beta} \dbE\left[\lan \bar{P}\bar X(t), \bar X(t) \ran\right] + K 
\end{aligned}
\end{equation*}
by using the result in \eqref{eq:boundedness_expectation}. From Gr\"onwall's inequality, we get the following estimate
\begin{equation*}
\begin{aligned}
\dbE\left[\lan \bar{P}\bar X(t), \bar X(t) \right] \les K e^{- \frac{1}{\beta} t} |x|^2 + K \int_0^t e^{- \frac{1}{\beta} (t-s)} ds \les K
\end{aligned}
\end{equation*}
for all $t \ges 0$, where $K$ is a positive constant independent of $t$. By the positivity of the matrix $\bar{P}$, we conclude the desired result that
$$\dbE\left[|\bar X(t)|^2 \right] \les K, \quad \forall t \ges 0.$$
\end{proof}

The second lemma characterizes the asymptotic behavior of the optimal path $\bar{X}(\cd)$ in \eqref{eq:optimal_path_ergodic} for Problem (EC).

\begin{lemma}
\label{l:invariant-distribution}
Let {\rm(H1)--(H2)} hold. Then, there exists an invariant measure $\nu_{\i}$ of $\bar X(\cd)$ in 2-Wasserstein space $\cP_2(\dbR^n)$ such that
$$\cW_2(\cL(\bar X(t), \nu_{\i}) \to 0, \quad \hbox{as } t \to \infty.$$
\end{lemma}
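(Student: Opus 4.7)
The plan is to exhibit $\nu_\i$ as the Wasserstein limit of $\cL(\bar X(t))$ by proving that the (nonlinear) mean field flow of laws induced by \eqref{eq:optimal_path_ergodic} is an exponential contraction on $(\cP_2(\dbR^n), \cW_2)$. Denote by $P_t\mu_0$ the law of $\bar X(t)$ when the SDE is started from initial law $\mu_0$; the key tools are a synchronous coupling driven by the same Brownian motion and the Lyapunov matrix $\bar P$ already constructed in Lemma \ref{l:moment_boundedness}, together with the two Hurwitz conditions supplied by Theorem \ref{t:infinite_time_homo}(i).

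First I would set up a synchronous coupling. Given two initial laws $\mu_0, \nu_0 \in \cP_2(\dbR^n)$, pick a $\cW_2$-optimal coupling $(X_0, Y_0)$ of $(\mu_0, \nu_0)$ and let $\bar X(\cd), \bar Y(\cd)$ solve \eqref{eq:optimal_path_ergodic} with these initial data and the same $W(\cd)$. Writing $Z(t) := \bar X(t) - \bar Y(t)$ and $\bar Z(t) := \dbE[Z(t)]$, subtracting the two SDEs yields
\begin{equation*}
dZ(t) = \{\wt A_1 Z(t) + \wt A_2 \bar Z(t)\}\, dt + \{\wt C_1 Z(t) + \wt C_2 \bar Z(t)\}\, dW(t),
\end{equation*}
and taking expectation gives the deterministic ODE $\dot{\bar Z}(t) = (\wt A_1 + \wt A_2)\bar Z(t)$. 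Since Theorem \ref{t:infinite_time_homo}(i) guarantees $\wt A_1 + \wt A_2 = A + \bar A + B\bar\Th^*$ is Hurwitz, $|\bar Z(t)| \les K e^{-\mu t}|\bar Z(0)|$ for some $K, \mu > 0$.

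For the fluctuation $Z(t) - \bar Z(t)$, I would apply It\^o's formula to $\lan \bar P(Z(t) - \bar Z(t)), Z(t) - \bar Z(t)\ran$, where $\bar P \in \dbS^n_{++}$ solves $\bar P \wt A_1 + \wt A_1^\top \bar P + \wt C_1^\top \bar P \wt C_1 + I_n = 0$. Taking expectation and using $\dbE[Z - \bar Z] = 0$ to annihilate the cross terms in the quadratic-variation expansion produces
\begin{equation*}
\tfrac{d}{dt}\dbE[\lan \bar P(Z - \bar Z), Z - \bar Z\ran] = -\dbE[|Z - \bar Z|^2] + \lan \bar P(\wt C_1 + \wt C_2)\bar Z, (\wt C_1 + \wt C_2)\bar Z\ran,
\end{equation*}
which, writing $\beta > 0$ for the largest eigenvalue of $\bar P$, is bounded above by $-\beta^{-1}\dbE[\lan \bar P(Z - \bar Z), Z - \bar Z\ran] + K|\bar Z(t)|^2$. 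Combining with the exponential decay of $|\bar Z(t)|^2$ and Gr\"onwall's inequality gives $\dbE[|Z(t) - \bar Z(t)|^2] \les Ke^{-\l t}\dbE[|Z(0)|^2]$, so by the triangle inequality $\dbE[|Z(t)|^2] \les Ke^{-\l t}\dbE[|Z(0)|^2] = Ke^{-\l t}\cW_2(\mu_0,\nu_0)^2$, i.e.\ the Wasserstein contraction $\cW_2(P_t\mu_0, P_t\nu_0) \les Ke^{-\l t/2}\cW_2(\mu_0,\nu_0)$.

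Finally, applying this contraction with $\mu_0 = \delta_x$ and $\nu_0 = P_s\delta_x = \cL(\bar X(s))$ together with the semigroup identity $P_t(P_s\delta_x) = \cL(\bar X(t+s))$ gives
\begin{equation*}
\cW_2(\cL(\bar X(t)), \cL(\bar X(t+s))) \les Ke^{-\l t/2}\cW_2(\delta_x, \cL(\bar X(s))),
\end{equation*}
whose right-hand factor is uniformly bounded in $s$ by Lemma \ref{l:moment_boundedness}. Hence $\{\cL(\bar X(t))\}_{t \ges 0}$ is Cauchy in the complete space $(\cP_2(\dbR^n), \cW_2)$, converging to some $\nu_\i$; passing to the limit $t\to\infty$ in $P_s\cL(\bar X(t)) = \cL(\bar X(t+s))$ and using the $\cW_2$-continuity of $P_s$ (which itself follows from the contraction) confirms $P_s\nu_\i = \nu_\i$ for every $s \ges 0$, so $\nu_\i$ is invariant. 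The main obstacle I expect is the It\^o calculation: the decomposition $Z = (Z - \bar Z) + \bar Z$ is essential because $\bar P$'s Lyapunov equation dissipates only the fluctuation dynamics $[\wt A_1, \wt C_1]$, whereas the mean is stabilized separately by the Hurwitz matrix $\wt A_1 + \wt A_2$; verifying that the cross terms between $\bar Z$ and $Z - \bar Z$ vanish in expectation is precisely what allows these two stabilizing mechanisms to combine into a single contractive estimate.
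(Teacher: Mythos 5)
Your proof is correct, and it takes a genuinely different route from the paper. The paper couples $\bar X(t+s)$ with $\bar X(t)$ directly by writing both as outputs of the variation-of-constants formula driven by an auxiliary fundamental matrix $\wt\Phi(\cd)$ on an independent Brownian motion (so that $\wt X^s(t)\overset{d}{=}\bar X(s+t)$, $\wt X^0(t)\overset{d}{=}\bar X(t)$), and then invokes the $L^2$-exponential stability $\dbE[\|\wt\Phi(t)\|^2]\les Ke^{-\l t}$ to get $\cW_2^2(\nu_{s+t},\nu_t)\les Ke^{-\l t}$; this is a time-shift coupling, and the Cauchy property is read off directly. You instead prove that the McKean--Vlasov flow $\mu_0\mapsto P_t\mu_0$ is an exponential $\cW_2$-contraction via a synchronous coupling of two arbitrary initial laws, splitting the difference $Z$ into its mean (stabilized by the Hurwitz matrix $\wt A_1+\wt A_2=A+\bar A+B\bar\Th^*$) and its fluctuation (dissipated by the Lyapunov matrix $\bar P$ of Lemma \ref{l:moment_boundedness}, with the cross terms in the quadratic variation vanishing because $\dbE[Z-\bar Z]=0$). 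Your computation is right: the constants $\wt b,\wt\sigma$ cancel in the difference equation, so the mean-field term enters only through $\bar Z$, which decays on its own. What your approach buys is strictly more than the lemma asks: uniqueness of the invariant measure, exponential convergence from \emph{any} initial law, and --- unlike the paper's proof, which stops at the Cauchy property and leaves invariance implicit --- an explicit verification that $P_s\nu_\i=\nu_\i$ via the flow identity and the Lipschitz continuity of $P_s$. What the paper's route buys is that it sidesteps the mean/fluctuation Lyapunov computation by reusing the explicit solution formula; the price is that the two copies $\wt X^s$ and $\wt X^0$ carry \emph{different} mean-field inputs $\dbE[\bar X(s+r)]$ versus $\dbE[\bar X(r)]$, whose difference must also be shown to decay --- a point your synchronous coupling avoids entirely since the means cancel exactly. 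The only minor points you should make explicit in a final write-up are that the stochastic integral in the It\^o step is a true martingale (immediate from the linear coefficients and finite second moments on compact intervals) and that $\dbE[|Z|^2]=\dbE[|Z-\bar Z|^2]+|\bar Z|^2$ is an exact orthogonal decomposition, so no triangle-inequality constant is even needed.
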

\begin{proof} 
Let $\nu_t = \cL(\bar X(t))$. We claim that it is Cauchy in $\cP_2(\dbR^n)$. Let $\Phi(\cd)$ denote the fundamental matrix of the homogeneous system $[\wt{A}_1, \wt{C}_1]$, i.e.,
\begin{equation*}
d \Phi(t) = \wt{A}_1 \Phi(t) dt + \wt{C}_1 \Phi(t) dW(t), \quad \Phi(0) = I_n.
\end{equation*}
It is known that $\Phi(t)$ is invertible for all $t \ges 0$, and the following equation holds
\begin{equation*}
d \Phi(t)^{-1} = \Phi(t)^{-1}(\wt{C}_1^2 - \wt{A}_1) dt - \Phi(t)^{-1} \wt{C}_1 dW(t), \quad \Phi(0)^{-1} = I_n.
\end{equation*}
Applying the product rule, we then obtain
\begin{equation*}
\begin{aligned}
d(\Phi(t)^{-1} \bar X(t)) &= \Phi(t)^{-1} \big(\wt{A}_2 \dbE[\bar X(t)] - \wt{C}_1 \wt{C}_2 \dbE[\bar X(t)] + \wt{b} - \wt{C}_1 \wt{\sigma} \big) dt \\
& \hspace{0.5in} + \Phi(t)^{-1} \big(\wt{C}_2 \dbE[\bar X(t)] + \wt{\sigma} \big) dW(t),
\end{aligned}
\end{equation*}
which yields
\begin{equation*}
\begin{aligned}
\bar X(t) &= \Phi(t) x + \int_0^t \Phi(t) \Phi(r)^{-1} \big((\wt{A}_2 - \wt{C}_1 \wt{C}_2) \dbE[\bar X(r)] + \wt{b} - \wt{C}_1 \wt{\sigma} \big) dr \\
& \hspace{0.5in} + \int_0^t \Phi(t) \Phi(r)^{-1} \big(\wt{C}_2 \dbE[\bar X(r)] + \wt{\sigma} \big) dW(r).
\end{aligned}
\end{equation*}
Let $\Phi(s, t) = \Phi(t) \Phi(s)^{-1}$. By a similar argument, for $s, t \ges 0$, the process $\bar X(\cd)$ satisfies the following representation
\begin{equation*}
\begin{aligned}
\bar X(t+s) &= \Phi(s, s+t) \bar X(s) + \Phi(s, s+t)  \int_s^{s+t} \Phi(s, r)^{-1} \big((\wt{A}_2 - \wt{C}_1 \wt{C}_2) \dbE[\bar X(r)] + \wt{b} - \wt{C}_1 \wt{\sigma} \big) dr \\
& \hspace{0.5in} + \Phi(s, s+t) \int_s^{s+t} \Phi(s, r)^{-1} \big(\wt{C}_2 \dbE[\bar X(r)] + \wt{\sigma} \big) dW(r),
\end{aligned}
\end{equation*}
which demonstrates the Markov property. Next, let $\wt{\Phi}(\cd)$ be the solution to the following SDE
\begin{equation*}
d \wt{\Phi}(t) = \wt{A}_1 \wt{\Phi}(t) dt + \wt{C}_1 \wt{\Phi}(t) d\wt{W}(t), \quad \wt{\Phi}(0) = I_n,
\end{equation*}
where $\wt{W}(\cd)$ is another standard Brownian motion, independent of $W(\cd)$. According to the definition of $L^2$-exponentially stability in Theorem 2.2 (i) of \cite{Jian-Jin-Song-Yong-2024}, there exist some $K > 0$ and $\lambda > 0$ such that
\begin{equation}
\label{eq:stability_phi}
\dbE \big[\|\wt{\Phi}(t)\|^2 \big] \les K e^{-\lambda t}, \quad \forall t \ges 0.
\end{equation}
We define $\wt{X}^s(\cd)$ as the solution to the following equation
\begin{equation*}
\begin{aligned}
\wt{X}^s(t) &= \wt{\Phi}(t) \bar X(s) + \wt{\Phi}(t)  \int_0^{t} \wt{\Phi}(r)^{-1} \big((\wt{A}_2 - \wt{C}_1 \wt{C}_2) \dbE[\wt{X}(r)] + \wt{b} - \wt{C}_1 \wt{\sigma} \big) dr \\
& \hspace{0.5in} + \wt{\Phi}(t)  \int_0^{t} \wt{\Phi}(r)^{-1} \big(\wt{C}_2 \dbE[\wt{X}(r)] + \wt{\sigma} \big) d \wt{W}(r).
\end{aligned}
\end{equation*}
Thus, $\wt{X}^s(t)$ and $\bar X(s+t)$ have the same distribution. Moreover, $\wt{X}^0(t)$ provides a distributional copy of $\bar X(t)$ as follows
\begin{equation*}
\begin{aligned}
\wt{X}^0(t) &= \wt{\Phi}(t) x + \wt{\Phi}(t)  \int_0^{t} \wt{\Phi}(r)^{-1} \big((\wt{A}_2 - \wt{C}_1 \wt{C}_2) \dbE[\wt{X}(r)] + \wt{b} - \wt{C}_1 \wt{\sigma} \big) dr \\
& \hspace{0.5in} + \wt{\Phi}(t)  \int_0^{t} \wt{\Phi}(r)^{-1} \big(\wt{C}_2 \dbE[\wt{X}(r)] + \wt{\sigma} \big) d \wt{W}(r).
\end{aligned}
\end{equation*}
By applying a similar argument regarding the $t$-uniform boundedness of the second moment in Lemma \ref{l:moment_boundedness}, we obtain that for all $s, t \ges 0$,
$$\dbE \big[|\wt{X}^s(t) - \wt{X}^0(t)|^2 \big] \les K \dbE \big[|\wt{\Phi}(t) (\bar{X}(s) - x)|^2 \big] \les K e^{-\lambda t},$$
where the last inequality follows from \eqref{eq:stability_phi} and the results in Lemma \ref{l:moment_boundedness}. Hence, for all $s, t \ges 0$,
\begin{equation*}
\begin{aligned}
\cW_2^2(\nu_{s+t}, \nu_t) = \cW_2^2 \big(\cL(\wt{X}^s(t)), \cL(\wt{X}^0(t)) \big) \les \dbE \big[|\wt{X}^s(t) - \wt{X}^0(t)|^2 \big] \les K e^{-\lambda t},
\end{aligned}
\end{equation*}
where $K$ and $\lambda$ are two positive constants independent of $s$ and $t$. This implies that $\{\nu_t\}_{t \ges 0}$ is a Cauchy sequence in $\cP_2(\dbR^n)$, and the desired conclusion follows from the completeness of the space $(\cP_2(\dbR^n), \cW_2)$.
\end{proof}

Now, we are ready to present the main result in this subsection. The following theorem provides a comprehensive characterization of Problem (EC) within the context of the LQ framework, leveraging the results of Problem (C) established in Theorem
\ref{t:infinite_time_homo}.

\begin{theorem}
\label{t:ergodic_control_problem}
Suppose {\rm(H1)--(H2)} hold. Let $(V(\cd, \cd), c_0)\in C^{2, 1}(\dbR^n \times \dbR^n) \times \dbR$ be the solution to Problem {\rm(C)}, with the explicit form
\begin{equation*}
V(x, \bar x) = \lan P(x - \bar x), x - \bar x \ran + \lan \Pi \bar x, \bar x\ran + 2\lan P_1 \bar x, x - \bar x\ran + 2\lan p, \bar x\ran + 2 \lan p_1, x - \bar x\ran,
\end{equation*}
where $\{P, \Pi, P_1, p, p_1\}$ is the unique solution to the algebraic Riccati equation \eqref{eq:algebraic_Riccati}, and $c_0$ is determined by \eqref{eq:c_0}.
Let $\bar X(\cd)$ be the solution to the SDE \eqref{eq:optimal_path_ergodic},
and $\bar u(\cd)$ be expressed as
\begin{equation}
\begin{aligned}
\label{eq:optimal_control_EC}
\bar u(t) &:= \bar u \left(\bar X(t), \dbE[\bar X(t)] \right) = \Th^{*} (\bar X(t) - \dbE[\bar X(t)] ) + \bar \Th^{*} \dbE[\bar X(t)]  + \theta^{*},
\end{aligned}
\end{equation}
where $\bar u: \dbR^n \times \dbR^n \mapsto \dbR^m$ is given by \eqref{eq:feedback_form}, and the coefficients $(\Th^*, \bar\Th^*, \theta^*)$ are defined by \eqref{eq:theta_star}. Then, the following results hold:

{\rm(i)} The distribution of the process $(\bar X(t), \bar u(t))$ converges to some $\bar\m_\i\in\cP_2(\dbR^n \times \dbR^m)$ in the 2-Wasserstein distance, as $t\to \i$;

{\rm(ii)} The constant $c_0$, referred to as the ergodic constant, is uniquely determined by the following limit
\begin{equation}
\label{eq:ergodic_constant}
c_0=\lim_{T\to\i} \frac{1}{T} \int_0^T \dbE \big[f(\bar X(t), \dbE[\bar X(t)], \bar u(t))\big]dt;
\end{equation}

{\rm(iii)} The 4-tuple $\{U(\cd), c_0, \bar X(\cd), \bar u(\cd)\}$ solves Problem {\rm (EC)}, where 
$$U(x) := V(x, x) - \lan V(\cd, \lan x, \pi_{\#} \bar{\mu}_{\i} \ran), \pi_{\#} \bar{\mu}_{\infty} \ran, \quad \forall x \in \dbR^n.$$
\end{theorem}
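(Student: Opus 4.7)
The plan is to establish the three parts in sequence, using Itô's formula applied to $V(\bar X(t), \dbE[\bar X(t)])$ along with the Bellman equation from Theorem \ref{t:infinite_time_homo} and the convergence results of Lemmas \ref{l:moment_boundedness} and \ref{l:invariant-distribution}. Throughout, the LQ structure drives several simplifications.

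Part (i) follows by combining Lemma \ref{l:invariant-distribution} with the explicit stability of the mean. Solving the ODE $\dot m(t) = (A + \bar A + B\bar\Th^*) m(t) + B\th^* + b$ for $m(t) := \dbE[\bar X(t)]$ and using that $A + \bar A + B \bar\Th^*$ is Hurwitz by Theorem \ref{t:infinite_time_homo}(i), we obtain $m(t) \to m_\i$ for some $m_\i \in \dbR^n$ that coincides with the mean of $\nu_\i$. Since $\bar u(t) = \Th^*(\bar X(t) - m(t)) + \bar\Th^* m(t) + \th^*$ is continuous and affine in $(\bar X(t), m(t))$, the joint law $\cL(\bar X(t), \bar u(t))$ converges to some $\bar\mu_\i \in \cP_2(\dbR^n \times \dbR^m)$; the uniform fourth-moment bound of Lemma \ref{l:moment_boundedness}, which controls the uniform integrability of $|\bar X|^2 + |\bar u|^2$, upgrades this to 2-Wasserstein convergence.

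For part (ii), apply Itô's formula to $V(\bar X(t), m(t))$, using that $t \mapsto m(t)$ is a deterministic $C^1$ path. The key observation is that the LQ linearity gives $\int \h b(x', \bar x, \bar u(x', \bar x)) \mu(dx') = (A+\bar A + B\bar\Th^*)\bar x + B\th^* + b$ for any $\mu \in \cP_2(\dbR^n)$ with mean $\bar x$, which equals $\dot m(t)$ when $\bar x = m(t)$. Consequently, the Bellman equation \eqref{eq:cell_problem} makes the drift of $V(\bar X(t), m(t))$ collapse to $c_0 - f(\bar X(t), m(t), \bar u(t))$. Integrating and taking expectations yields
\begin{equation*}
\int_0^T \dbE[f(\bar X(t), m(t), \bar u(t)) - c_0]\,dt = V(x,x) - \dbE[V(\bar X(T), m(T))],
\end{equation*}
and dividing by $T$ and sending $T \to \i$ (the right-hand side stays bounded by Lemma \ref{l:moment_boundedness}) gives \eqref{eq:ergodic_constant}.

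For part (iii), taking $T \to \i$ directly in the identity above and using part (i) with the uniform $L^4$-bound to dominate the quadratic $V$ yields $\dbE[V(\bar X(T), m(T))] \to \langle V(\cdot, m_\i), \pi_\# \bar\mu_\i\rangle$, hence $\wt J_\i(x; \bar u) = V(x,x) - \langle V(\cdot, m_\i), \pi_\# \bar\mu_\i\rangle = U(x)$. It remains to show $c \ges c_0$ and $\wt J_\i(x; u) \ges U(x)$ for arbitrary $u \in \sU$. Given such a $u$ with $\cL(X(t), u(t)) \to \mu^u_\i$, Lemma \ref{l:limit_test_function} converts the Cesàro limit of $\dbE[f]$ into $\langle f(\cdot, m^u_\i, \cdot), \mu^u_\i\rangle$ where $m^u_\i := \langle x, \pi_\# \mu^u_\i\rangle$. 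The convergence of $\dbE[X(t)]$ forces the equilibrium constraint $(A + \bar A) m^u_\i + B\langle u, \mu^u_\i\rangle + b = 0$ (by passing to the limit in $\dot m(t) = (A+\bar A) m(t) + B\dbE[u(t)] + b$). A completion-of-squares argument, decomposing $\langle f, \mu^u_\i\rangle - c_0$ into a non-negative quadratic form via the algebraic Riccati equations \eqref{eq:algebraic_Riccati} and this equilibrium constraint, establishes $\langle f(\cdot, m^u_\i, \cdot), \mu^u_\i\rangle \ges c_0$, giving $c = c_0$. The residual-cost bound $\wt J_\i(x; u) \ges U(x)$ then follows: if $\langle f, \mu^u_\i\rangle > c_0$, the integral diverges to $+\i$; if $\langle f, \mu^u_\i\rangle = c_0$, an Itô comparison applied to $V$ along $X$ driven by $u$, exploiting the next-order difference between $\cL^u V + f$ and $c_0$, closes the argument. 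The principal obstacle is this completion-of-squares step, which requires careful bookkeeping of mean and centered components under the algebraic identities encoded in \eqref{eq:algebraic_Riccati} together with the explicit form of $c_0$ in \eqref{eq:c_0}.
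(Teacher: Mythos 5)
Your parts (i) and (ii) are essentially correct and coincide with the paper's own route: (i) is Lemma \ref{l:invariant-distribution} combined with the stability of the mean ODE, and (ii) is exactly the equality case of the It\^o computation for $V(\bar X(t),\dbE[\bar X(t)])$ together with the Bellman equation \eqref{eq:cell_problem}. (One small remark on (i): the uniform fourth-moment bound is not what upgrades the convergence to $\cW_2$ --- Lemma \ref{l:invariant-distribution} already produces a Cauchy sequence in $\cW_2$, and the joint law converges because it is the pushforward of $\cL(\bar X(t))$ under the affine map $x\mapsto(x,\Th^*(x-m(t))+\bar\Th^* m(t)+\th^*)$ whose coefficients converge; the fourth moments are needed instead to guarantee that the stochastic integral in the It\^o step is a true martingale.)

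The gap is in part (iii), in the step where you claim that the algebraic Riccati equations \eqref{eq:algebraic_Riccati} together with the mean equilibrium constraint $(A+\bar A)m^u_\i+B\lan u,\mu^u_\i\ran+b=0$ yield $\lan f(\cdot,m^u_\i,\cdot),\mu^u_\i\ran\ges c_0$ by completion of squares. This cannot close as stated: $\lan f,\mu^u_\i\ran$ depends on the full second moments of $\mu^u_\i$ (through $\lan Qx,x\ran$, $\lan Sx,u\ran$, $\lan Ru,u\ran$), whereas the constraint you extract pins down only the first moments. The constant $c_0$ in \eqref{eq:c_0} contains the term $\sigma^\top P\sigma$ precisely because the stationary covariance of the optimally controlled diffusion contributes to the average cost; a competitor $u\in\sU$ whose limiting law has a different covariance is not constrained by the mean identity at all. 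What the completion of squares actually gives is
$$f(x,\bar x,u)-c_0=\lan\cR(P)\big(u-\bar u(x,\bar x)\big),\,u-\bar u(x,\bar x)\ran-\big(\cL^{x,\bar x,u}V\big)(x,\bar x),$$
where $\cL^{x,\bar x,u}V$ denotes the full generator expression appearing in \eqref{eq:cell_problem}; to conclude you must also show that this generator term integrates to zero against $\mu^u_\i$, which is a second-moment (Lyapunov-type) stationarity identity, not merely the first-moment one. Once you prove that identity you have in effect reproduced the paper's argument, which sidesteps the issue entirely: it runs the It\^o computation for an \emph{arbitrary} $u\in\sU$, uses the pointwise Bellman inequality $c_0\les \cL^{x,\bar x,u}V+f$ to obtain $V(x,x)\les\dbE[V(X(t),\dbE[X(t)])]+\dbE\big[\int_0^t(f-c_0)\,ds\big]$, and then divides by $t$ (the boundary term stays bounded because $\cL(X(t),u(t))$ converges in $\cW_2$ and $V$ is quadratic, via Lemma \ref{l:limit_test_function}). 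That single inequality delivers both $c\ges c_0$ and the residual-cost bound $\wt J_\i(x;u)\ges U(x)$ simultaneously, so you should replace the static argument by this dynamic one --- which is also what your own ``It\^o comparison'' in the subcase $\lan f,\mu^u_\i\ran=c_0$ implicitly amounts to.
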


\begin{proof}
The first result follows directly from Lemma \ref{l:invariant-distribution}. By an abuse of notation, we denote that
\begin{equation*}
\begin{aligned}
\h b(x, \bar x, u) &= Ax + \bar A \bar x + B u + b, \quad \h \sigma(x, \bar x, u) = Cx + \bar C \bar x + D u + \sigma, \\
f(x, \bar x, u) &= \lan Qx, x\ran + 2 \lan Sx, u\ran + \lan Ru, u\ran + 2\lan q, x\ran + 2\lan r, u\ran + \lan \bar Q \bar x, \bar x \ran.
\end{aligned}
\end{equation*}
Given that $(V(\cd, \cd), c_0)\in C^{2, 1}(\dbR^n \times \dbR^n) \times \dbR$ is the solution to Problem (C), for any control $u(\cd) \in \sU$ and its associated state process $X(\cd):= X(\cd \, ; x, u(\cd))$, applying It\^o's formula leads to
\begin{equation*}
\begin{aligned}
&V(X(t), \dbE[X(t)]) = V(x, x) + \int_0^t \big\{\lan \h b(X(s), \dbE[X(s)], u(s)), D_x V(X(s), \dbE[X(s)]) \ran \\
& \hspace{0.4in} + \frac{1}{2} \lan D_x^2 V(X(s), \dbE[X(s)]) \h \sigma(X(s), \dbE[X(s)], u(s)), \h \sigma(X(s), \dbE[X(s)], u(s)) \ran \big\} ds \\
& \hspace{0.4in} + \int_0^t \wt{\dbE} \big[\lan \h b(X'(s), \dbE[X(s)], u'(s)), D_{\bar x} V(X(s), \dbE[X(s)]) \ran \big] ds \\
& \hspace{0.4in} + \int_0^t \lan \h \sigma (X(s), \dbE[X(s)], u(s)), D_x V(X(s), \dbE[X(s)]) \ran dW(s).
\end{aligned}
\end{equation*}
Here, $(\wt{\O}, \wt{\cF}, \wt{\dbP})$ denotes a copy of $(\O, \cF, \dbP)$, $X'(\cd)$ and $u'(\cd)$ are the corresponding copies of $X(\cd)$ and $u(\cd)$ on $(\wt{\O}, \wt{\cF}, \wt{\dbP})$, and $\wt{\dbE}$ is the expectation with respect to $\wt{\dbP}$. Note that the quadratic variation of the last term satisfies
\begin{equation*}
\begin{aligned}
& \dbE \Big[\int_0^t | \lan \h \sigma (X(s), \dbE[X(s)], u(s)), D_x V(X(s), \dbE[X(s)]) \ran |^2 ds \Big] \\
\les \ & K \dbE \Big[\int_0^t \big( 1 + |X(s)|^4 + |u(s)|^4 + |\dbE[X(s)]|^4 \big) ds \Big] \\
\les \ & K \Big( t + \dbE \Big[\int_0^t \big(|X(s)|^4 + |u(s)|^4 \big) ds \Big] \Big)
\end{aligned}
\end{equation*}
which is finite by the definition of $\sU$. Fixing $t > 0$, and taking expectation on both sides, we obtain
\begin{equation*}
\begin{aligned}
& \dbE[V(X(t), \dbE[X(t)])] = V(x, x) + \dbE \Big[ \int_0^t \big\{\lan \h b(X(s), \dbE[X(s)], u(s)), D_x V(X(s), \dbE[X(s)]) \ran \\
& \hspace{0.4in} + \frac{1}{2} \lan D_x^2 V(X(s), \dbE[X(s)]) \h \sigma(X(s), \dbE[X(s)], u(s)), \h \sigma(X(s), \dbE[X(s)], u(s)) \ran \big \} ds  \\
& \hspace{0.4in} + \int_0^t \wt{\dbE} \big[\lan \h b(X'(s), \dbE[X(s)], u'(s)), D_{\bar x} V(X(s), \dbE[X(s)]) \ran \big] ds \Big].
\end{aligned}
\end{equation*}
From the definition of Problem (C), for all $(X(\cd), u(\cd))$ and $s \in [0, t]$,
\begin{equation*}
\begin{aligned}
c_0 &\les \lan \h b(X(s), \dbE[X(s)], u(s)), D_x V(X(s), \dbE[X(s)]) \ran + f(X(s), \dbE[X(s)], u(s)) \\
& \hspace{0.4in} + \frac{1}{2} \lan D_x^2 V(X(s), \dbE[X(s)]) \h \sigma(X(s), \dbE[X(s)], u(s)), \h \sigma(X(s), \dbE[X(s)], u(s)) \ran  \\
& \hspace{0.4in} + \wt{\dbE} \big[\lan \h b(X'(s), \dbE[X(s)], u'(s)), D_{\bar x} V(X(s), \dbE[X(s)]) \ran \big],
\end{aligned}
\end{equation*}
it follows that
\begin{equation}
\label{eq:verification_inequality}
V(x, x) \les \dbE[V(X(t), \dbE[X(t)])] + \dbE \Big[\int_0^t \big( f(X(s), \dbE[X(s)], u(s)) - c_0 \big) ds\Big].
\end{equation}
The above inequality holds for all $u(\cd) \in \sU$. It is worth noting that Lemma \ref{l:moment_boundedness} and \ref{l:invariant-distribution}, together with equation \eqref{eq:optimal_control_EC} imply that $\bar u(\cd)\in\sU$. This, in turn, confirms that $\sU \neq \varnothing$. Moreover, setting $u(\cd) = \bar u(\cd)$, the above inequality becomes an equality. Then, since the value function $V(\cd, \cd)$ satisfies a quadratic growth condition, i.e.,
$$V(x , \bar x) \les K(1 + |x|^2 + |\bar x|^2)$$
for all $(x, \bar x) \in \dbR^n \times \dbR^n$, we have
\begin{equation*}
\lim_{t \to \infty} \dbE[V(X(t), \dbE[X(t)])] = \lan V(\cd, \lan x, \pi_{\#} \mu_{\i}^{u} \ran), \pi_{\#} \mu_{\infty}^{u} \ran,
\end{equation*}
and
\begin{equation*}
\lim_{t \to \infty} \dbE[f(X(t), \dbE[X(t)], u(t))] = \lan f(\cd, \lan x, \pi_{\#} \mu_{\i}^{u} \ran, \cd), \mu_{\i}^{u} \ran,
\end{equation*}
where $\mu_{\i}^{u}$ is the limiting distribution of $(X(t), u(t))$ as $t \to \infty$ for a given $u(\cd) \in \sU$. Thus, dividing both sides of \eqref{eq:verification_inequality} by $t$ and letting $t \to \infty$, we obtain
\begin{equation}
\label{eq:inequality_c0}
c_0 \les \lim_{t \to \infty} \frac{1}{t} \int_0^t \dbE[f(X(s), \dbE[X(s)], u(s))] ds = \lan f(\cd, \lan x, \pi_{\#} \mu_{\i}^{u} \ran, \cd), \mu_{\i}^{u} \ran.
\end{equation}
The above inequality holds for all $u(\cd) \in \sU$ and it can be revised be equality if we set $u(\cd) = \bar u(\cd)$, with $\mu_{\i}^{\bar u} = \bar \mu_{\i}$. Consequently, we establish the desired result
$$c_0 = \lim_{T \to \infty} \frac{1}{T} \int_0^T \dbE[f(\bar X(t), \dbE[\bar X(t)], \bar u(t))] dt = \lan f(\cd, \lan x, \pi_{\#} \bar{\mu}_{\i} \ran, \cd), \bar{\mu}_{\i} \ran$$
and thus conclude part (ii) of Theorem \ref{t:ergodic_control_problem}.

Next, as $t$ goes to infinity, we derive the following inequality
$$V(x, x) \les \lan V(\cd, \lan x, \pi_{\#} \mu_{\i}^{u} \ran), \pi_{\#} \mu_{\infty}^{u} \ran + \lim_{t \to \i} \int_0^t \big( \dbE[f(X(s), \dbE[X(s)], u(s))] - c_0 \big) ds,$$
which yields that for all $u(\cd) \in \sU$,
\begin{equation}
\label{eq:value_inequality}
V(x, x) \les \lan V(\cd, \lan x, \pi_{\#} \bar{\mu}_{\i} \ran), \pi_{\#} \bar{\mu}_{\infty} \ran + \lim_{t \to \i} \int_0^t \big(\dbE[f(X(s), \dbE[X(s)], u(s))] - c_0 \big) ds.
\end{equation}
This is because
\begin{enumerate}
\item[(i)] Firstly, using \eqref{eq:ergodic_constant} and \eqref{eq:inequality_c0}, for all $u(\cd) \in \sU$, we have
$$\lan f(\cd, \lan x, \pi_{\#} \mu_{\i}^{u} \ran, \cd), \mu_{\i}^{u} \ran \ges \lan f(\cd, \lan x, \pi_{\#} \bar{\mu}_{\i} \ran, \cd), \bar{\mu}_{\i} \ran = c_0.$$

\item[(ii)] Then, the inequality \eqref{eq:value_inequality} can be verified by examining two cases:
\begin{itemize}
\item If $\lan f(\cd, \lan x, \pi_{\#} \mu_{\i}^{u} \ran, \cd), \mu_{\i}^{u} \ran = \lan f(\cd, \lan x, \pi_{\#} \bar{\mu}_{\i} \ran, \cd), \bar{\mu}_{\i} \ran$, then \eqref{eq:value_inequality} holds with $\mu_{\i}^u$ replaced by $\bar \mu_{\i}$;
\item If $\lan f(\cd, \lan x, \pi_{\#} \mu_{\i}^{u} \ran, \cd), \mu_{\i}^{u} \ran > \lan f(\cd, \lan x, \pi_{\#} \bar{\mu}_{\i} \ran, \cd), \bar{\mu}_{\i} \ran$, then
$$\lim_{t \to \i} \int_0^{t} \big( \dbE[ f(X(s), \dbE[X(s)], u(s))] - c_0 \big) ds = \i,$$
which makes the right-hand side of \eqref{eq:value_inequality} infinite.
\end{itemize}
\end{enumerate}
Furthermore, \eqref{eq:value_inequality} holds as an equality when $u(\cd) = \bar u(\cd)$. Thus, we can rewrite it as
$$V(x, x) - \lan V(\cd, \lan x, \pi_{\#} \bar{\mu}_{\i} \ran), \pi_{\#} \bar{\mu}_{\infty} \ran = \inf_{u(\cd) \in \sU} \lim_{t \to \i} \int_0^t \dbE[f(X(s), \dbE[X(s)], u(s)) - c_0 ] ds.$$
This establishes result (iii) of Theorem \ref{t:ergodic_control_problem}.
\end{proof}

\section{Convergence and turnpike properties}
\label{s:convergence_and_turnpike}

In this section, we first derive some key estimates that connect the coefficient functions $P(\cdot)$, $\Pi(\cd)$, $p(\cdot)$,
$\Theta_T^*(\cdot)$, $\bar \Th_T^*(\cd)$, and $\theta_T^*(\cdot)$, which arise in Problem (MFLQ)$^T$ from Section \ref{s:Finite_MFLQ}, to the corresponding constant matrices and vectors $P$, $\Pi$, $p$, $\Theta^*$, $\bar \Th^*$ and $\theta^*$ from Problem (C) in Subsection \ref{s:cell_problem}. Then, leveraging these natural convergence results, we demonstrate the stochastic turnpike property between the optimal pairs of Problem (MFLQ)$^T$ and Problem (EC). Finally, we establish a connection between Problem (C) and the ergodic cost in \eqref{eq:ergodic_cost}.

\subsection{Convergence of coefficients}
\label{s:Estimates}

Firstly, under the assumptions (H1)-(H2), Lemma 2.3 in \cite{Sun-Yong-2024} proved that
\begin{equation}
\label{eq:estimates_P}
\|P(t)-P\|\les Ke^{-\l(T-t)}, \quad \forall t \in [0,T],
\end{equation}
for some $K,\l>0$, independent of $T$, where $P(\cd)$ is the positive definite solution of differential Riccati equation for $P(\cd)$ in \eqref{eq:Riccati_ODE}
and $P$ is the stabilizing solution to algebraic Riccati equation \eqref{eq:ARE_P}, respectively. From \eqref{eq:estimates_P}, it is clear that
\begin{equation*}
\|P(t)\|\les K, \quad \forall t \in [0,T].
\end{equation*}

Next, by Theorem 4.1 in \cite{Sun-Yong-2024}, under the same assumptions (H1)-(H2), there exist some positive constant $K$ and $\l$, independent of $T$, such that
\begin{equation}
\label{eq:estimates_Pi}
\|\Pi(t)-\Pi\| \les Ke^{-\l(T-t)}, \quad \forall t \in [0,T],
\end{equation}
where $\Pi(\cd)$ is the positive definite solution of differential Riccati equation for $\Pi(\cd)$ in \eqref{eq:Riccati_ODE}
and $\Pi$ is the positive definite solution of algebraic Riccati equation
for $\Pi$ in \eqref{eq:algebraic_Riccati}. 

\ms

We now state and prove the main result of this subsection.

\begin{theorem}
\label{t:estimates}
Let {\rm(H1)-(H2)} hold. Then, there exist some constants $K, \l > 0$, independent of $T$, such that
\begin{equation}
\label{eq:estimates_Theta_T}
\|\Th_T^*(t)-\Th^*\| \les K e^{-\l(T-t)}, \quad \forall t \in [0, T],
\end{equation}
\begin{equation}
\label{eq:estimates_bar_Theta_T}
\|\bar \Th_T^*(t) - \bar \Th^*\| \les K e^{-\l(T-t)}, \quad \forall t \in [0, T],
\end{equation}
\begin{equation}
\label{eq:estimates_p}
|p(t)-p| \les K e^{-\l(T-t)}, \quad \forall t \in [0, T],
\end{equation}
\begin{equation}
\label{eq:estimates_theta}
|\th_T^*(t)-\th^*| \les K e^{-\l(T-t)} \quad \forall t \in [0, T].
\end{equation}
Consequently, $\Th_T^*(\cd)$, $\bar \Th_T^*(\cd)$, $\th_T^*(\cd)$, and $p(\cd)$ are uniformly bounded on $[0, T]$.
\end{theorem}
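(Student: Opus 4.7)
The plan is to leverage the already-stated estimates \eqref{eq:estimates_P} for $P(\cdot)-P$ and \eqref{eq:estimates_Pi} for $\Pi(\cdot)-\Pi$, together with the uniform positivity $\mathcal R(P(t))\ges \delta I_m$ from Theorem~\ref{t:finite_time_control}(i), and propagate these through the algebraic relations defining $\Th_T^*$, $\bar\Th_T^*$, $\th_T^*$, and the ODE defining $p(\cd)$. First, the resolvent identity
\begin{equation*}
\mathcal R(P(t))^{-1}-\mathcal R(P)^{-1}=\mathcal R(P(t))^{-1} D^\top(P-P(t))D\,\mathcal R(P)^{-1}
\end{equation*}
combined with \eqref{eq:estimates_P} gives $\|\mathcal R(P(t))^{-1}-\mathcal R(P)^{-1}\|\les K e^{-\l(T-t)}$. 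Since $\mathcal S$ and $\h{\mathcal S}$ are affine in $P$ and $\Pi$, the differences $\mathcal S(P(t))-\mathcal S(P)=B^\top(P(t)-P)+D^\top(P(t)-P)C$ and $\h{\mathcal S}(P(t),\Pi(t))-\h{\mathcal S}(P,\Pi)=B^\top(\Pi(t)-\Pi)+D^\top(P(t)-P)\h C$ also obey the same bound. Writing
\begin{equation*}
\Th_T^*(t)-\Th^*=-\bigl[\mathcal R(P(t))^{-1}-\mathcal R(P)^{-1}\bigr]\mathcal S(P(t))-\mathcal R(P)^{-1}\bigl[\mathcal S(P(t))-\mathcal S(P)\bigr],
\end{equation*}
and the analogous decomposition for $\bar\Th_T^*(t)-\bar\Th^*$, the bounds \eqref{eq:estimates_Theta_T} and \eqref{eq:estimates_bar_Theta_T} follow immediately.

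The estimate \eqref{eq:estimates_p} for $q(t):=p(t)-p$ is the main obstacle. Subtracting the fourth equation of the algebraic system \eqref{eq:algebraic_Riccati} from the fourth equation of the differential system \eqref{eq:Riccati_ODE} and collecting terms around the stable matrix $M^*:=A+\bar A+B\bar\Th^*$ (which is Hurwitz by Theorem~\ref{t:infinite_time_homo}(i)), I obtain a backward linear ODE of the form
\begin{equation*}
\dot q(t)+(M^*)^\top q(t)+g(t)=0,\qquad q(T)=-p,
\end{equation*}
where $g(t)$ aggregates the residual terms. Each summand of $g$ is the product of a uniformly bounded factor with one of $P(t)-P$, $\Pi(t)-\Pi$, or $\bar\Th_T^*(t)-\bar\Th^*$, hence by \eqref{eq:estimates_P}, \eqref{eq:estimates_Pi}, \eqref{eq:estimates_bar_Theta_T} we have $|g(t)|\les Ke^{-\l(T-t)}$. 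Setting $\tau=T-t$ and using variation of parameters together with the standard bound $\|e^{(M^*)^\top\tau}\|\les K e^{-\l\tau}$ coming from the Hurwitz property yields
\begin{equation*}
|q(t)|\les K e^{-\l(T-t)}|p|+K\int_0^{T-t}\! e^{-\l(T-t-s)} e^{-\l s}\,ds\les K(1+(T-t))\,e^{-\l(T-t)}.
\end{equation*}
The polynomial-times-exponential factor is the only nuisance; absorbing it by replacing $\l$ with any $\l'\in(0,\l)$ and using $(T-t)e^{-\l(T-t)}\les C_{\l'}e^{-\l'(T-t)}$ recovers \eqref{eq:estimates_p}.

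Finally, \eqref{eq:estimates_theta} is a consequence of \eqref{eq:estimates_P} and \eqref{eq:estimates_p} via the same resolvent-style decomposition used for $\Th_T^*$:
\begin{equation*}
\th_T^*(t)-\th^*=-\bigl[\mathcal R(P(t))^{-1}-\mathcal R(P)^{-1}\bigr]\bigl(B^\top p(t)+D^\top P(t)\sigma+r\bigr)-\mathcal R(P)^{-1}\bigl[B^\top(p(t)-p)+D^\top(P(t)-P)\sigma\bigr],
\end{equation*}
in which every factor is either uniformly bounded or of order $e^{-\l(T-t)}$. The uniform boundedness of $\Th_T^*(\cd)$, $\bar\Th_T^*(\cd)$, $\th_T^*(\cd)$, $p(\cd)$ is then automatic, since the limit objects $\Th^*, \bar\Th^*, \th^*, p$ are $T$-independent constants and the distances to them are bounded by $K$ on $[0,T]$. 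The only real work, and the place where one must be careful with the rate, is the variation-of-parameters step for $q(\cd)$.
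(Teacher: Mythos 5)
Your argument follows the paper's proof almost step for step: the same resolvent identity for $\mathcal R(P(t))^{-1}-\mathcal R(P)^{-1}$, the same affine decompositions for $\Th_T^*$, $\bar\Th_T^*$ and $\th_T^*$, and the same backward linear ODE for $p(t)-p$ driven by the stable matrix $M^*=A+\bar A+B\bar\Th^*$. The one cosmetic difference in the ODE step --- you use a single decay rate and absorb the resulting factor $(T-t)e^{-\l(T-t)}$ by shrinking $\l$, whereas the paper takes two distinct rates $\l_1\neq\l_2$ so that the convolution integral is already purely exponential --- is immaterial; both are standard and correct.

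There is, however, one genuine gap: a circularity in the treatment of $p(\cd)$. When you collect the difference of the differential and algebraic equations around $M^*$, the residual $g(t)$ necessarily contains the term $[B(\bar\Th_T^*(t)-\bar\Th^*)]^\top p(t)$ with the \emph{time-dependent} $p(t)$, not the constant $p$. To conclude $|g(t)|\les Ke^{-\l(T-t)}$ with $K$ independent of $T$, the ``uniformly bounded factor'' you invoke is precisely $p(\cd)$ itself --- yet at the end you declare the uniform boundedness of $p(\cd)$ to be ``automatic'' as a consequence of \eqref{eq:estimates_p}. As written, the argument assumes what it later derives. The paper avoids this by establishing $\sup_{t\in[0,T]}|p(t)|\les K$ \emph{before} estimating $\h h(t)$, via a separate argument (it cites the analogous step in \cite{Jian-Jin-Song-Yong-2024}). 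The gap is easy to close without an external reference: either split $p(t)=\big(p(t)-p\big)+p$ in the offending term, so that the residual involves only the constant $p$ plus a term of the form (exponentially small coefficient) times $(p(t)-p)$, which a Gr\"onwall argument absorbs because that coefficient is integrable on $[0,T]$ uniformly in $T$; or first apply variation of parameters to the equation for $p(\cd)$ itself, whose inhomogeneity involves only the already-known uniformly bounded quantities $P(\cd)$, $\Pi(\cd)$, $\bar\Th_T^*(\cd)$, to obtain the uniform bound on $p(\cd)$ directly. With either repair, the rest of your proof stands and coincides with the paper's.
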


\begin{proof}
Firstly, by definition and using the result in \eqref{eq:estimates_P}, we have
\begin{equation*}
\begin{aligned}
\|\Th_T^*(t)-\Th^*\| & = \big\|- \cR(P(t))^{-1} \cS(P(t)) + \cR(P)^{-1} \cS(P) \big\| \\
& \les  \big\| \big([R+D^\top P(t) D]^{-1}-[R+D^\top PD]^{-1} \big) [B^\top P(t)+D^\top P(t)C+S] \big\| \\
&\hspace{0.5in} + \big\|[R+D^\top PD]^{-1} \big([B^\top P(t)+D^\top P(t)C] - [B^\top P+D^\top PC] \big) \big\|\\
& \les Ke^{-\l(T-t)}
\end{aligned}
\end{equation*}
for all $t \in [0,T]$. This result follows from the observation that 
$$[R+D^\top P(t) D]^{-1}-[R+D^\top PD]^{-1} = [R+D^\top P(t) D]^{-1} D^\top (P - P(t)) D [R+D^\top PD]^{-1}.$$
Thus, \eqref{eq:estimates_Theta_T} is established, which in turn implies that $\Th_T^*(t)$ is uniformly bounded on $[0, T]$. Then, by a similar argument and utilizing the results in \eqref{eq:estimates_P} and \eqref{eq:estimates_Pi}, we further obtain
$$\|\bar \Th_T^*(t) - \bar \Th^*\| = \|- \cR(P(t))^{-1} \h \cS(P(t), \Pi(t)) + \cR(P)^{-1} \h \cS(P, \Pi)\| \les K e^{-\l(T-t)}$$
for all $t \in [0,T]$, which establishes that $\bar \Th_T^*(t)$ is also uniformly bounded on $[0, T]$. 

Next, we aim to show \eqref{eq:estimates_p}. Recall that $p(\cd) \in C^1([0, T]; \dbR^n)$ satisfies 
\begin{equation*}
\begin{cases}
\vspace{4pt}
\displaystyle
\dot{p}(t) + [A + \bar A + B\bar \Th_T^*(t)]^\top p(t) + [C + \bar C + D \bar \Th_T^*(t)]^\top P(t) \sigma \\
\vspace{4pt}
\displaystyle
\hspace{0.5in} + \bar \Th_T^*(t)^\top  r + \Pi(t) b + q = 0, \\
p(T) = 0,
\end{cases}
\end{equation*}
and $p$ is the solution to the equation
$$(A + \bar A + B\bar \Th^*)^\top p + (C + \bar C + D \bar \Th^*)^\top P \sigma + (\bar \Th^*)^\top  r + \Pi b + q = 0.$$
The uniform boundedness of $p(\cd)$ on $[0, T]$ can be obtained by the similar argument from \cite{Jian-Jin-Song-Yong-2024}. To compare $p(\cd)$ and $p$, we denote the difference as
$$\h p(t) = p(t) - p, \quad \forall t \in [0, T],$$
and it is clear that
\begin{equation*}
\begin{aligned}
- \frac{d}{dt} \h p(t) &= [A + \bar A + B\bar \Th^*]^\top \h p(t) + [B(\bar \Th_T^*(t) - \bar \Th^*)]^\top p(t) + [C + \bar C + D \bar \Th^*]^\top (P(t) - P) \sigma \\
& \hspace{0.5in} + [D (\bar \Th_T^*(t) - \bar \Th^*)]^\top P(t) + [\bar \Th_T^*(t) - \bar \Th^*]^\top r + (\Pi(t) - \Pi) b \\
&:= [A + \bar A + B\bar \Th^*]^\top \h p(t) + \h h(t),
\end{aligned}
\end{equation*}
with the terminal condition $\h p(T) = p(T) - p = -p$. The definition of $\h h(\cd)$ leads to
\begin{equation*}
\begin{aligned}
|\h h(t)| &\les \big|B(\bar \Th_T^*(t) - \bar \Th^*)]^\top p(t) \big| + \big|[C + \bar C + D \bar \Th^*]^\top (P(t) - P) \sigma \big| \\
& \hspace{0.5in} + \big|[D (\bar \Th_T^*(t) - \bar \Th^*)]^\top P(t) \big| + \big|[\bar \Th_T^*(t) - \bar \Th^*]^\top r \big| + \big| (\Pi(t) - \Pi) b \big|.
\end{aligned}
\end{equation*}
From the uniform boundedness of $P(\cd)$ and $p(\cd)$, and the exponential convergence results \eqref{eq:estimates_P}, \eqref{eq:estimates_Pi}, and \eqref{eq:estimates_bar_Theta_T}, we have
$$|\h h(t)| \les K e^{-\l_1(T-t)}, \quad \forall t \in [0, T]$$
for some positive constants $K$ and $\l_1$. By the assumption (H2) and the fact that $\bar \Th^{*}$
is a stabilizer of \eqref{eq:homo_ode} in Theorem \ref{t:infinite_time_homo} (i), the matrix $A + \bar A + B\bar \Th^*$ is stable. Consequently, there exist some constants $K, \l_2 > 0$ with $\l_2 \neq \l_1$ such that
$$\big\|e^{(A + \bar A + B\bar \Th^*)^\top t} \big\| \les K e^{- \l_2 t}, \quad \forall t \ges 0.$$
Thus, there exists some $\l$ satisfying $0 < \l \les \min\{\l_1, \l_2\}$ such that
\begin{equation*}
\begin{aligned}
|\h p(t)| &\les \big|e^{(A + \bar A + B\bar \Th^*)^\top (T-t)} p \big| + \int_t^T \big| e^{(A + \bar A + B\bar \Th^*)^\top (s - t)} \h h(s) \big| ds \\
&\les K e^{- \l_2 (T-t)} + K \int_t^T e^{- \l_2 (s-t)} e^{-\l_1 (T-s)} ds \\
&\les K e^{- \l (T-t)}
\end{aligned}
\end{equation*}
for all $t \in [0, T]$, which concludes \eqref{eq:estimates_p}.

Finally, we prove \eqref{eq:estimates_theta}. By definition, one has
\begin{equation*}
\begin{aligned}
|\th_T^*(t)-\th^*| &= \big| \cR(P(t))^{-1} (B^\top p(t) + D^\top P(t) \sigma + r) - \cR(P)^{-1} (B^\top p + D^\top P \sigma + r) \big| \\
& \les \big| \big([R+D^\top P(t) D]^{-1}-[R+D^\top PD]^{-1} \big) [B^\top p(t) + D^\top P(t) \sigma + r] \big|\\
&\hspace{0.5in} + \big|[R+D^\top PD]^{-1} \big(B^\top (p(t) - p) + D^\top (P(t) - P) \sigma \big) \big|\\
& \les Ke^{-\l(T-t)}
\end{aligned}
\end{equation*}
by using the inequalities \eqref{eq:estimates_P} and \eqref{eq:estimates_p}, and the facts that $P(\cd)$ and $p(\cd)$ are uniformly bounded on $[0, T]$.
\end{proof}

\subsection{Turnpike properties}
\label{s:turnpike_properties}

We now present the first main result of this subsection: a turnpike property that characterizes the connection between the optimal pair of Problem (MFLQ)$^T$ and those of Problem (EC).

\begin{theorem}
\label{t:turnpike_property}
Suppose that {\rm(H1)-(H2)} hold. Let $(X_T(\cd), u_T(\cd))$ denote the optimal pair of Problem (MFLQ)$^T$ with initial state $x \in \mathbb R^n$, and $(\bar X(\cd), \bar u(\cd))$ be the optimal pair of Problem (EC) with an arbitrary initial state $\bar x \in \mathbb R^n$. Then, there exist positive constants $K$ and $\lambda$, independent of $T$, such that
\begin{equation}
\label{eq:strong_turnpike}
\dbE \big[|X_{T}(t) - \bar X (t)|^2
+ |u_{T} (t) - \bar u (t)|^2
\big] \les K \big(e^{-\lambda t} + e^{-\lambda(T - t)} \big), \quad \forall t \in [0, T].
\end{equation}
\end{theorem}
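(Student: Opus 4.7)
The plan is to control the difference process $Y(t) := X_T(t) - \bar X(t)$ by combining the decomposition $Y = \xi + Z$, with $\xi(t) := \dbE[Y(t)]$ deterministic and $Z(t) := Y(t) - \xi(t)$ centered, with the exponential convergence of the coefficient functions established in Theorem \ref{t:estimates}. First I would substitute the closed-loop representations \eqref{eq:optimal_control_finite} and \eqref{eq:optimal_control_EC} into the state equations for $X_T$ and $\bar X$ and subtract. Using the notation $\wt A_1, \wt A_2, \wt C_1, \wt C_2$ introduced before \eqref{eq:optimal_path_ergodic}, this yields
\begin{equation*}
dY(t) = \{\wt A_1 Y(t) + \wt A_2 \dbE[Y(t)] + R_d(t)\} dt + \{\wt C_1 Y(t) + \wt C_2 \dbE[Y(t)] + R_\sigma(t)\} dW(t),
\end{equation*}
with $Y(0) = x - \bar x$, where the residuals $R_d(t), R_\sigma(t)$ collect the terms multiplied by the differences $\Th_T^*(t) - \Th^*$, $\bar\Th_T^*(t) - \bar\Th^*$ and $\theta_T^*(t) - \theta^*$. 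A preliminary step paralleling Lemma \ref{l:moment_boundedness} establishes $\sup_{T>0,\,t\in[0,T]} \dbE[|X_T(t)|^2] \les K$, via a Lyapunov argument for the closed-loop dynamics of $X_T$ whose coefficients are uniformly close to the stable system $[\wt A_1, \wt C_1; \wt A_2, \wt C_2]$. Combined with Theorem \ref{t:estimates}, this produces the pointwise bound $\dbE[|R_d(t)|^2 + |R_\sigma(t)|^2] \les K e^{-2\l(T-t)}$.

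Taking expectations, $\xi(\cd)$ solves $\dot \xi(t) = (\wt A_1 + \wt A_2)\xi(t) + \dbE[R_d(t)]$ with $\xi(0) = x - \bar x$. Since $\wt A_1 + \wt A_2 = A + \bar A + B\bar\Th^*$ is Hurwitz by Theorem \ref{t:infinite_time_homo}(i), there exist $K, \mu > 0$ with $\|e^{(\wt A_1+\wt A_2)t}\| \les K e^{-\mu t}$, and variation of parameters together with the elementary estimate $\int_0^t e^{-\mu(t-s)} e^{-\l(T-s)} ds \les K(e^{-\l'(T-t)} + e^{-\l' t})$ with $\l' := \min\{\mu, \l\}$ gives $|\xi(t)|^2 \les K(e^{-\l' t} + e^{-\l'(T-t)})$. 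The fluctuation $Z(\cd)$ satisfies
\begin{equation*}
dZ(t) = \{\wt A_1 Z(t) + R_d(t) - \dbE[R_d(t)]\} dt + \{\wt C_1 Z(t) + (\wt C_1 + \wt C_2)\xi(t) + R_\sigma(t)\} dW(t),
\end{equation*}
with $Z(0) = 0$ since $x, \bar x$ are deterministic. Because $\Th^*$ is an $L^2$-stabilizer of \eqref{eq:homo_sde} by Theorem \ref{t:infinite_time_homo}(i), the Lyapunov equation $\bar P \wt A_1 + \wt A_1^\top \bar P + \wt C_1^\top \bar P \wt C_1 + I_n = 0$ admits a unique $\bar P \in \dbS^n_{++}$. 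Applying It\^o's formula to $\lan \bar P Z(t), Z(t)\ran$, taking expectations, absorbing cross terms by Young's inequality and inserting the bounds on $\xi$ and the residuals produces a differential inequality of the form
\begin{equation*}
\tfrac{d}{dt}\dbE[\lan \bar P Z(t), Z(t)\ran] \les -\tfrac{1}{2\beta}\dbE[\lan \bar P Z(t), Z(t)\ran] + K(e^{-2\l' t} + e^{-2\l'(T-t)}),
\end{equation*}
where $\beta$ is the largest eigenvalue of $\bar P$; Gr\"onwall and the same convolution estimate then yield $\dbE[|Z(t)|^2] \les K(e^{-\l'' t} + e^{-\l''(T-t)})$.

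Combining the two bounds gives $\dbE[|X_T(t) - \bar X(t)|^2] \les K(e^{-\l t} + e^{-\l(T-t)})$ after relabeling $\l$ as the minimum of all the rates involved. For the control difference, regrouping \eqref{eq:optimal_control_finite} and \eqref{eq:optimal_control_EC} yields
\begin{equation*}
u_T(t) - \bar u(t) = \Th^*(Y(t) - \xi(t)) + \bar\Th^* \xi(t) + \Delta(t),
\end{equation*}
with $\Delta(t) := (\Th_T^*(t) - \Th^*)(X_T(t) - \dbE[X_T(t)]) + (\bar\Th_T^*(t) - \bar\Th^*)\dbE[X_T(t)] + (\theta_T^*(t) - \theta^*)$, so $\dbE[|\Delta(t)|^2] \les K e^{-2\l(T-t)}$ by the uniform moment bound on $X_T$ and Theorem \ref{t:estimates}. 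Together with the estimate on $Y$, this closes \eqref{eq:strong_turnpike}. The main obstacle is the two-scale structure of the error: the residuals $R_d, R_\sigma, \Delta$ inherited from the finite-horizon optimality decay as $e^{-\l(T-t)}$ (from the terminal time), whereas the stable feedback semigroup $e^{\wt A_1 t}$ and the Lyapunov mean-reversion act as $e^{-\l t}$ (from the initial time). Care is needed in the convolution estimate to confirm that their interaction produces exactly the symmetric two-sided rate $e^{-\l t} + e^{-\l(T-t)}$; a secondary point is that the residuals implicitly depend on $X_T$, so the $T$-uniform $L^2$-bound on $X_T$ must be obtained first, independently of the turnpike estimate on $Y$.
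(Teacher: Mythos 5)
Your proposal is correct and follows the same two-step strategy as the paper's proof: first bound the mean of the difference process using the Hurwitz property of $A+\bar A+B\bar\Th^*$ from Theorem \ref{t:infinite_time_homo}(i), then bound the second moment via It\^o's formula applied to a quadratic form, a Gr\"onwall argument, and the exponential coefficient estimates of Theorem \ref{t:estimates}; the control estimate then follows by regrouping the feedback representations exactly as in the paper. The differences are organizational. You split $Y=\xi+Z$ and run the second-moment estimate on the centered part $Z$ with the Lyapunov matrix $\bar P$ solving $\bar P\wt A_1+\wt A_1^\top\bar P+\wt C_1^\top\bar P\wt C_1+I_n=0$, whereas the paper works with the full difference $\h X$ and the Riccati solution $P$ of \eqref{eq:ARE_P}, for which the closed-loop Lyapunov operator equals $-(Q+S^\top\Th^*+(\Th^*)^\top S+(\Th^*)^\top R\Th^*)<0$; both quadratic forms do the same job, and your exact cancellation of the $\wt A_2\,\dbE[Y]$ term in the drift of $Z$ is a small simplification. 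Second, your residuals $R_d$, $R_\sigma$, $\Delta$ are written in terms of $X_T$, so you correctly flag that a $T$-uniform bound $\dbE[|X_T(t)|^2]\les K$ must be established first by a separate Lyapunov argument (this works because, although $\Th_T^*(t)$ is not close to $\Th^*$ near $t=T$, the deviation $Ke^{-\l(T-t)}$ has a $T$-uniformly bounded integral); the paper sidesteps this preliminary step by grouping the coefficient differences against $\bar X$ and $\h X$ instead, so that Lemma \ref{l:moment_boundedness} and the already-established bound on $\dbE[\h X(t)]$ suffice, and the uniform bound on $X_T$ falls out as a corollary of the turnpike estimate. Either routing is sound, and your convolution estimates deliver the claimed two-sided rate in \eqref{eq:strong_turnpike}.
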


\begin{proof}
From part (ii) of Theorem \ref{t:finite_time_control}, the optimal trajectory $X_T(\cd)$ for the finite-horizon LQ mean field control problem satisfies the following SDE:
\begin{equation*}
\begin{cases}
\vspace{4pt}
\displaystyle
d X_T(t) = \{(A + B \Th_T^*(t))(X_T(t) - \dbE[X_T(t)]) + (A + \bar A + B \bar \Th_T^*(t)) \dbE[X_T(t)] + B \theta_T^*(t) + b \} dt \\
\vspace{4pt}
\displaystyle
\hspace{0.7in} + \{(C + D \Th_T^*(t))(X_T(t) - \dbE[X_T(t)]) + (C + \bar C + D \bar \Th_T^*(t)) \dbE[X_T(t)] + D \theta_T^*(t) + \sigma \} dW(t), \\
X_T(0) = x.
\end{cases}
\end{equation*}
For any $t \in [0, T]$, we let
$\h X(t) = X_T(t) - \bar X(t)$, then it follows that
\begin{equation*}
\begin{aligned}
d \h X(t) &= d (X_T(t) - \bar X(t)) \\
&= \big\{(A + B \Th_T^*(t))(\h X(t) - \dbE[\h X(t)]) + B(\Th_T^*(t) - \Th^*) (\bar X(t) - \dbE[\bar X(t)]) \\
& \hspace{0.5in} + (A + \bar A + B \bar \Th_T^*(t)) \dbE[\h X(t)] + B(\bar \Th_T^*(t) - \bar \Th^*) \dbE[\bar X(t)] + B(\theta_T^*(t) - \theta^*) \big\} dt \\
& \hspace{0.2in} + \big\{(C + D \Th_T^*(t))(\h X(t) - \dbE[\h X(t)]) + D(\Th_T^*(t) - \Th^*) (\bar X(t) - \dbE[\bar X(t)]) \\
& \hspace{0.5in} + (C + \bar C + D \bar \Th_T^*(t)) \dbE[\h X(t)] + D(\bar \Th_T^*(t) - \bar \Th^*) \dbE[\bar X(t)] + D(\theta_T^*(t) - \theta^*) \big\} dW(t)
\end{aligned} 
\end{equation*}
with the initial state
$$\h X(0) = X_T(0) - \bar X(0) = x - \bar x \in \dbR^n.$$ 
Consequently, the function $t \mapsto \dbE[\h X(t)]$ satisfies the following ODE
\begin{equation*}
\begin{aligned}
\frac{d}{dt} \dbE[\h X(t)] &= \big\{(A + \bar A + B \bar \Th^*) \dbE[\h X(t)] + B(\bar \Th_T^*(t) - \bar \Th_T^*) \dbE[\h X(t)] \\
& \hspace{0.5in} + B(\bar \Th_T^*(t) - \bar \Th_T^*) \dbE[\bar X(t)] + B(\theta_T^*(t) - \theta^*)\big\} dt,
\end{aligned}
\end{equation*}
which leads to
\begin{equation*}
\begin{aligned}
\dbE[\h X(t)] &= e^{(A + \bar A + B \bar \Th^*) t} (x - \bar x) + \int_0^t e^{(A + \bar A + B \bar \Th^*) (t-s)} \big\{ B(\bar \Th_T^*(t) - \bar \Th_T^*) \dbE[\h X(t)] \\
& \hspace{0.5in} + B(\bar \Th_T^*(t) - \bar \Th_T^*) \dbE[\bar X(t)] + B(\theta_T^*(t) - \theta^*) \big\} ds.
\end{aligned}
\end{equation*}
Since $A + \bar A + B \bar \Th^*$ is stable, by the results in Lemma \ref{l:moment_boundedness} and Theorem \ref{t:estimates}, we obtain
\begin{equation*}
\begin{aligned}
\big|\dbE[\h X(t)] \big| &\les Ke^{-\lambda t} + K \int_0^t e^{- \lambda (t-s)} e^{-\lambda(T-s)} \big(\big|\dbE[\h X(s)] \big| + 1 \big) ds \\
&\les K \big( e^{-\lambda t} + e^{-\lambda (T-t)} \big) + K \int_0^t e^{- \lambda (T + t - 2s)} \big|\dbE[\h X(s)] \big| ds.
\end{aligned}
\end{equation*}
Applying Gr\"onwall's inequality, we establish the uniformly boundedness of $\dbE[\h X(\cdot)]$ on $[0, T]$, i.e.,
\begin{equation}
\label{eq:bounded_X_hat}
\big|\dbE[\h X(t)] \big| \les K \big( e^{-\lambda t} + e^{-\lambda (T-t)} \big), \quad \forall t \in [0, T].
\end{equation}

Next, let $P$ be the positive definite solution to the algebraic equation \eqref{eq:ARE_P}. By It\^o's formula,
\begin{equation*}
\begin{aligned}
& \frac{d}{dt} \dbE \big[\lan P \h X(t), \h X(t)\ran \big] \\
= \ & \dbE \big[\lan \{P(A + B \Th^*_T(t)) + (A + B \Th^*_T(t))^\top P + (C + D \Th^*_T(t))^\top P (C + D \Th^*_T(t))\} \h X(t), \h X(t) \ran  \\
& \hspace{0.5in} + 2 \lan \h X(t), H_1(t) \ran + \lan P H_2(t), H_2(t) \ran \big],
\end{aligned}
\end{equation*}
where
\begin{equation*}
\begin{aligned}
H_2(t) &:= (\bar C + D \bar{\Th}_T^*(t) - D \Th_T^*(t)) \dbE[\h X(t)] + D(\Th_T^*(t) - \Th^*) (\bar X(t) - \dbE[\bar X(t)]) \\
& \hspace{0.5in} + D(\bar{\Th}_T^*(t) - \bar{\Th}^*) \dbE[\bar X(t)] + D (\theta_T^*(t) - \theta^*),
\end{aligned}
\end{equation*}
and
\begin{equation*}
\begin{aligned}
H_1(t) &:= P(\bar A + B \bar{\Th}_T^*(t) - B \Th_T^*(t)) \dbE[\h X(t)] + PB(\Th_T^*(t) - \Th^*) (\bar X(t) - \dbE[\bar X(t)]) \\
& \hspace{0.5in} + PB(\bar{\Th}_T^*(t) - \bar{\Th}^*) \dbE[\bar X(t)] + PB (\theta_T^*(t) - \theta^*) + (C + D \Th^*_T(t))^\top P H_2(t).
\end{aligned}
\end{equation*}
Using the inequality \eqref{eq:bounded_X_hat} together with the results from Lemma \ref{l:moment_boundedness} and Theorem \ref{t:estimates} again, we get the following estimate for $H_1(t)$ and $H_2(t)$:
\begin{equation*}
\begin{aligned}
\dbE \big[|H_1(t)|^2 + |H_2(t)|^2 \big] &\les K \big(|\dbE[\h X(t)]|^2 + \|\Th_T^*(t) - \Th^*\|^2 + \|\bar{\Th}_T^*(t) - \bar{\Th}^* \|^2 + |\theta_T^*(t) - \theta^*|^2 \big) \\
& \les K \big( e^{-\lambda t} + e^{-\lambda (T-t)} \big), \quad \forall t \in [0, T]
\end{aligned}
\end{equation*}
for some $K > 0$ and $\lambda > 0$. Moreover, we observe that
\begin{equation*}
\begin{aligned}
& P(A+B\Th^*_T(t)) + (A+B\Th_T^*(t))^\top P
+(C+D\Th_T^*(t))^\top P (C+D\Th_T^*(t)) \\
= \ & P(A+B\Th^*) + (A+B\Th^*)^\top P + (C+D\Th^*)^\top P(C+D\Th^*) + H_3(t),
\end{aligned}
\end{equation*}
and
\begin{equation*}
P(A+B\Th^*) + (A+B\Th^*)^\top P + (C+D\Th^*)^\top P(C+D\Th^*) = -(Q + S^\top \Th^* + (\Th^*)^\top S+(\Th^*)^\top R \Th^*) < 0,
\end{equation*}
where
\begin{equation*}
\begin{aligned}
H_3(t) &= PB(\Th_T^*(t) -\Th^*) +(B(\Th_T^*(t)-\Th^*))^\top P + (D(\Th_T^*(t) - \Th^*))^\top P (C+D\Th^*) \\
& \hspace{0.5in} +(C+D\Th_T^*(t))^\top PD(\Th_T^*(t)-\Th^*).
\end{aligned}
\end{equation*}
Using the estimation results in Theorem \ref{t:estimates}, we deduce that there exist some constants $K > 0$ and $\lambda > 0$ such that
\begin{equation*}
\|H_3(t)\| \les K e^{-\lambda (T - t)}, \quad \forall t \in [0, T].  
\end{equation*}
Let $\beta_1 > 0$ and $\beta_2 > 0$ denote the smallest eigenvalues of $P$ and $Q + S^\top \Th^* + (\Th^*)^\top S+(\Th^*)^\top R \Th^*$ respectively, and let $\h \beta > 0$ is the largest eigenvalue of $P$. Then, it is follows that
\begin{equation}
\label{eq:eigen_inequality}
\beta_1 \dbE\big[|\h X(t)|^2\big] \les \dbE \big[\lan P \h X(t), \h X(t)\ran \big] \les \h \beta \dbE\big[|\h X(t)|^2\big], \quad \forall t \in [0, T].
\end{equation}
Combining the above results and applying Young's inequality, we find that
\begin{equation*}
\begin{aligned}
\frac{d}{dt} \dbE \big[\lan P \h X(t), \h X(t)\ran \big] & \les \dbE \Big[\big(- \beta_2 + Ke^{-\lambda(T-t)} \big) |\h X(t)|^2 + 2 |H_1(t)| |\h X(t)| + K|H_2(t)|^2 \Big] \\
& \les \dbE \Big[ \Big(- \frac{\beta_2}{2} + Ke^{-\lambda(T-t)} \Big) |\h X(t)|^2 + \frac{2}{\beta_2} |H_1(t)|^2 + K \big(e^{-\lambda t} + e^{-\lambda(T-t)} \big) \Big] \\
& \les \Big(- \frac{\beta_2}{2} + Ke^{-\lambda(T-t)} \Big) \dbE \big[ |\h X(t)|^2 \big] + K \big(e^{-\lambda t} + e^{-\lambda(T-t)} \big)
\end{aligned}
\end{equation*}
for all $t \in [0, T]$. By the inequality \eqref{eq:eigen_inequality}, there exists a constant $K_1 > 0$ such that
$$\frac{d}{dt} \dbE \big[\lan P \h X(t), \h X(t)\ran \big] \les K_1 \Big(- \frac{\beta_2}{2} + Ke^{-\lambda(T-t)} \Big) \dbE \big[\lan P \h X(t), \h X(t)\ran \big] + K \big(e^{-\lambda t} + e^{-\lambda(T-t)} \big).$$
Denote that $g(t) := K_1(- \frac{\beta_2}{2} + Ke^{-\lambda(T-t)})$ for all $t \in [0, T]$. Then, it is clear that, for all $0 \les s \les t < T$, the following estimate holds:
$$\exp \Big\{\int_s^t g(r) dr\Big\} = \exp \Big\{\frac{K_1K}{\lambda} e^{-\lambda T}(e^{\lambda t} - e^{\lambda s}) - \frac{K_1 \beta_2}{2}(t-s) \Big\} \les \exp \Big\{\frac{K_1K}{\lambda} - \frac{K_1 \beta_2}{2}(t-s) \Big\}.$$
Thus, we have
$$\int_0^t e^{\int_s^t g(r) dr} K \big(e^{-\lambda s} + e^{-\lambda (T-s)}\big) ds \les K \big(e^{-\lambda t} + e^{-\lambda (T-t)}\big), \quad \forall t \in [0, T],$$
with possible different constants $K, \lambda > 0$.
It follows that
\begin{equation*}
\begin{aligned}
\dbE \big[\lan P \h X(t), \h X(t)\ran \big] & \les \dbE \big[\lan P \h X(0), \h X(0)\ran \big] e^{\int_0^t g(r) dr} + \int_0^t e^{\int_s^t g(r) dr} K \big(e^{-\lambda s} + e^{-\lambda (T-s)}\big) ds \\
& \les K \big(e^{-\lambda t} + e^{-\lambda(T-t)} \big)
\end{aligned}
\end{equation*}
for all $t \in [0, T]$. 
Hence, since $P$ is positive definite, there exist some positive constants $K$ and $\lambda$, independent of $t$ and $T$, such that
\begin{equation*}
\dbE \big[|\h X(t)|^2 \big] \les K \big(e^{-\lambda t} + e^{-\lambda(T-t)} \big), \quad \forall t \in [0, T].
\end{equation*}
Next, from the above estimation and the moment boundedness of $\bar X(\cd)$ in Lemma \ref{l:moment_boundedness}, 
$$\dbE \big[|X_T(t)|^2 \big] \les K, \quad \forall t \in [0, T].$$
Note that for all $t \in [0, T]$, the difference $u_T(t) - \bar u(t)$ satisfies the equality
\begin{equation*}
\begin{aligned}
u_T(t) - \bar u(t) &= \Th_T^*(t) \h X(t)) + (\Th_T^*(t) - \Th^*) \bar X(t) + (\bar{\Th}_T^*(t) - \Th_T^*(t)) \dbE[\h X(t)] \\
& \hspace{0.5in} + (\bar{\Th}_T^*(t) - \bar{\Th}^* + \Th^* - \Th_T^*(t)) \dbE[\bar X(t)] + \theta_T^*(t) - \theta^*.
\end{aligned}
\end{equation*}
By applying Lemma \ref{l:moment_boundedness} and Theorem \ref{t:estimates}, we conclude that the turnpike property for the optimal control holds. Specifically,
\begin{equation*}
\dbE \big[|u_T(t) - \bar u(t)|^2 \big] \les K \big(e^{-\lambda t} + e^{-\lambda (T-t)} \big), \quad \forall t \in [0, T].
\end{equation*}
This completes the proof.
\end{proof}

\begin{remark}
\label{r:comparison}
\textit{It is worth noting that our result in Theorem \ref{t:turnpike_property} allows an arbitrary initial state for the optimal path $\bar{X}(\cd)$ of Problem (EC), i.e., the turnpike property remains valid for any choice of the initial state of $\bar{X}(\cd)$. In particular, if $\bar{X}(0) = x^*$, where $x^*$ solves the static optimization problem \eqref{eq:static_optimization_problem}, we could recover the result in Theorem 3.2 of \cite{Sun-Yong-2024}.}

\textit{The pair of processes $(\BX^*(\cd), \Bu^*(\cd))$ used to establish the turnpike property \eqref{eq:turnpike_stochastic} in \cite{Sun-Yong-2024} are defined as follows:
$$\BX^*(t) = X^*(t) + x^*, \quad \Bu^*(t) = \Th^* X^*(t) + u^*, \quad \forall t \ges 0,$$
where $(x^*, u^*)$ is the solution to the static optimization problem \eqref{eq:static_optimization_problem}, $\Theta^*$ is defined in \eqref{eq:theta_star}, and $X^*(\cd)$ is the solution to the following SDE
\begin{equation*}
\begin{cases}
d X^*(t) = (A + B \Th^*) X^*(t) dt + [(C+ D \Th^*) X^*(t) + \sigma^*] dW(t), \quad t \ges 0, \\
X^*(0) = 0,
\end{cases}
\end{equation*}
with $\sigma^* := (C+ \bar{C}) x^* + Du^* + \sigma$.
}

\textit{Firstly, we claim that $u^* = \bar{\Theta}^* x^* + \theta^*$, where $\bar{\Theta}^*$ and $\theta^*$ are given in \eqref{eq:theta_star}. By the feedback form of the optimal control of Problem (MFLQ)$^T$ in \eqref{eq:optimal_control_finite}, we observe that
\begin{equation*}
\begin{aligned}
u^* - \big(\bar{\Theta}^* x^* + \theta^* \big) &= \Bu^*(t) - \Th^* X^*(t) - \bar{\Theta}^* x^* - \theta^* \\
&= \Bu^*(t) - u_T(t) + \Th_T^*(t) X_T(t) - \Th^* \BX^*(t) + \Th^* \dbE[\BX^*(t)] - \Th_T^*(t) \dbE[X_T(t)] \\
& \hspace{0.5in} + \bar{\Theta}_T^*(t) \dbE[X_T(t)] - \bar{\Theta}^* \dbE[\BX^*(t)] + \th_T^*(t) - \theta^*
\end{aligned}
\end{equation*}
as $\dbE[\BX^*(t)] = x^*$. Then, using the results in \eqref{eq:turnpike_stochastic} and Theorem \ref{t:estimates}, we derive the following estimate:
$$\big|u^* - \big(\bar{\Theta}^* x^* + \theta^* \big) \big| \les K \big(e^{-\lambda t} + e^{-\lambda (T-t)} \big), \quad \forall t \in [0, T].$$
Thus, let $t = T/2$ and let $T \to \infty$, we obtain $u^* = \bar{\Theta}^* x^* + \theta^*$.} 

\textit{Next, from the constraint $(A + \bar{A}) x^* + Bu^* + b = 0$ of the static optimization problem \eqref{eq:static_optimization_problem}, it is clear that
$$(A + \bar A + B \bar \Th^*) x^* + (B\theta^* + b) = 0,$$
which yields $x^* = - (A + \bar A + B \bar \Th^*)^{-1} (B\theta^* + b)$ as the matrix 
$A + \bar A + B \bar \Th^*$ is stable.}

\textit{Finally, we show that $\BX^*(\cd)$ satisfies the same SDE as $\bar{X}(\cd)$ in \eqref{eq:optimal_path_ergodic} if we set $\bar{X}(0) = x^*$. By the definition of $\BX^*(\cd)$, it is straightforward that
\begin{equation*}
\begin{aligned}
d \BX^*(t) &= (A + B \Th^*) (\BX^*(t) - x^*) dt + [(C+ D \Th^*) (\BX^*(t) - x^*) + \sigma^*] dW(t) \\
&= \big\{(A + B \Th^*) (\BX^*(t) - \dbE[\BX^*(t)]) + (A + \bar A + B \bar \Th^*) \dbE[\BX^*(t)] + (B\theta^* + b) \big\} dt \\
& \hspace{0.2in} + \big\{(C + D \Th^*) (\BX^*(t) - \dbE[\BX^*(t)]) + (C + \bar C + D \bar \Th^*) \dbE[\BX^*(t)] + (D\theta^* + \sigma) \big\} dW(t),
\end{aligned}
\end{equation*}
which concludes the desired result.
}
\end{remark}

Distinct from the turnpike property described by \eqref{eq:strong_turnpike}, next we establish a related turnpike behavior concerning the cost function:
$$c_0 = \lim_{T\to\i}\frac{1}{T} J_T(x; \bar u(\cd)) = \bar c := \lim_{T\to\i} \frac{1}{T} J_T(x; u_T(\cd)).$$
This result demonstrates that achieving near-optimality in terms of the long-term average cost does not necessitate solving for the optimal control $u_T(\cd)$ for each terminal time $T$. Instead, it is sufficient to compute the optimal control $\bar u(\cd)$ from Problem (EC).

\begin{lemma}
\label{l:convergence_of_value_function}
Assume that {\rm(H1)-(H2)} hold. Let $J_{T}(x; \bar u(\cd))$ denote the cost functional evaluated along the optimal control
$\bar u(\cd)$ of Problem (EC) over the finite horizon $[0, T]$, i.e.,
$$J_T(x; \bar u(\cd)) := \dbE \Big[\int_0^T f(\bar X(s), \dbE[\bar X(s)], \bar u(s)) d s\Big].$$
For all $x \in \mathbb R^n$, the following estimation holds:
$$0 \les J_T(x; \bar u(\cd)) - V_T(x) = O(1).$$
Furthermore, the constant $c_0$ of Problem (C) is identical to the ergodic cost $\bar c$ defined via \eqref{eq:ergodic_cost}, i.e.,
$$c_0 = \bar c = \lim_{T\to \infty} \frac{1}{T} V_{T}(x), \quad x \in \mathbb R^n.$$
\end{lemma}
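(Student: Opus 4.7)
The plan is to combine the turnpike property in Theorem \ref{t:turnpike_property} with the ergodic cost identification in Theorem \ref{t:ergodic_control_problem}(ii). The lower bound $0\les J_T(x;\bar u(\cd)) - V_T(x)$ is immediate once we verify that $\bar u(\cd)|_{[0,T]} \in \sU[0,T]$: by the explicit representation \eqref{eq:optimal_control_EC} and the uniform second-moment bound on $\bar X(\cd)$ provided by Lemma \ref{l:moment_boundedness}, $\dbE[\int_0^T|\bar u(t)|^2 dt]<\infty$, so $V_T(x) = \inf_{u\in\sU[0,T]} J_T(x;u) \les J_T(x;\bar u(\cd))$.

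For the $O(1)$ upper bound, I would invoke Theorem \ref{t:turnpike_property} with the specific choice of initial state $\bar X(0)=x$, which yields
\begin{equation*}
\dbE\big[|X_T(t)-\bar X(t)|^2 + |u_T(t)-\bar u(t)|^2 \big] \les K\big(e^{-\l t} + e^{-\l(T-t)}\big),\qquad t\in[0,T].
\end{equation*}
Since $\bar X(\cd),\bar u(\cd)$ are uniformly $L^2$-bounded in $t$ (Lemma \ref{l:moment_boundedness} together with the boundedness of $\Th^*,\bar\Th^*,\theta^*$), the turnpike estimate propagates to $X_T(\cd),u_T(\cd)$, so all four processes are uniformly bounded in $L^2$, with bounds independent of $T$ and $t$. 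Because $f$ is a quadratic form, its difference can be written as a bilinear expression involving $X_T-\bar X$, $\dbE[X_T]-\dbE[\bar X]$, $u_T-\bar u$ paired with the corresponding sums $X_T+\bar X$, $u_T+\bar u$, etc. Applying Cauchy--Schwarz then gives
\begin{equation*}
\big|\dbE[f(X_T(t),\dbE[X_T(t)],u_T(t)) - f(\bar X(t),\dbE[\bar X(t)],\bar u(t))] \big| \les K'\big(e^{-\l t/2} + e^{-\l(T-t)/2}\big),
\end{equation*}
and integrating over $[0,T]$ shows $J_T(x;\bar u(\cd)) - V_T(x) = J_T(x;\bar u(\cd)) - J_T(x;u_T(\cd)) = O(1)$, uniformly in $T$.

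For the identification $\bar c = c_0$, Theorem \ref{t:ergodic_control_problem}(ii) applied with $\bar X(0)=x$ yields $\lim_{T\to\infty}\frac{1}{T} J_T(x;\bar u(\cd)) = c_0$. Combining with the $O(1)$ bound just proven and dividing by $T$ gives
\begin{equation*}
\bar c = \lim_{T\to\infty}\frac{1}{T}V_T(x) = \lim_{T\to\infty}\frac{1}{T}J_T(x;\bar u(\cd)) = c_0.
\end{equation*}
An alternative derivation of the limit $\lim_{T\to\infty}\frac{1}{T}V_T(x) = c_0$ that avoids Theorem \ref{t:ergodic_control_problem} altogether could also be carried out directly from the closed-form expression for $V_T(x)$ in Theorem \ref{t:finite_time_control}(iii), by plugging in the exponential convergence estimates $P(t)\to P$, $p(t)\to p$, $\theta_T^*(t)\to\theta^*$ from Theorem \ref{t:estimates} and recalling the explicit formula \eqref{eq:c_0} for $c_0$; this serves as a useful consistency check.

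The main technical point to get right is the Cauchy--Schwarz step: one needs a quantitative Lipschitz-type bound for $f$ with growth factor controlled by the $L^2$ norms of $(X_T,u_T,\bar X,\bar u)$, so that the quadratic nature of the cost does not destroy the exponential decay from the turnpike estimate. The key observation is that after pairing differences with sums, the slower $\sqrt{e^{-\l t}+e^{-\l(T-t)}} \asymp e^{-\l t/2}+e^{-\l(T-t)/2}$ decay is still integrable, with integral bounded uniformly in $T$, which is precisely what is needed for the $O(1)$ conclusion.
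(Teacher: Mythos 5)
Your proof is correct and follows essentially the same route as the paper's: bound $J_T(x;\bar u(\cd)) - V_T(x) = J_T(x;\bar u(\cd)) - J_T(x;u_T(\cd))$ from below by admissibility of $\bar u(\cd)$ and from above by pairing the quadratic cost differences with sums, applying Cauchy--Schwarz/H\"older together with Lemma \ref{l:moment_boundedness} and Theorem \ref{t:turnpike_property}, and integrating the resulting $e^{-\l t/2}+e^{-\l(T-t)/2}$ bound; the identification $c_0=\bar c$ then follows from Theorem \ref{t:ergodic_control_problem}(ii) exactly as in the paper. Your write-up actually spells out the H\"older step that the paper delegates to a cited reference, and the admissibility check for $\bar u(\cd)|_{[0,T]}$ is a welcome addition.
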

\begin{proof}
By definition, for all $x \in \mathbb R^n$, we have
$$|J_T(x; \bar u(\cd)) -
J_T(x; u_T(\cd))| \les \dbE\Big[ \int_0^T |f(\bar X(t), \dbE[\bar X(t)], \bar u(t)) - f(X_T(t), \dbE[X_T(t)], u_T(t))|\Big].$$
Using a similar argument as in Theorem 6.5 of \cite{Jian-Jin-Song-Yong-2024}, and applying H\"older’s inequality along with the results in Lemma \ref{l:moment_boundedness} and Theorem \ref{t:turnpike_property}, for all $x \in \mathbb R^n$, we obtain the desired estimation that
$$|J_T(x; \bar u(\cd)) - J_T(x; u_T(\cd))| \les K,$$
where $K$ is independent of $T$. Moreover, by the definition of the value function $V_{T}(x) := J_{T} \left(x; u_T(\cd) \right)$ of Problem (MFLQ)$^T$, it is clear that for all $x \in \mathbb R^n$,
$0 \les J_T(x; \bar u(\cd)) - J_T(x; u_T(\cd))$.
Therefore, for all $x \in \mathbb R^n$, the following holds:
$$0 \les J_T(x;\bar u(\cd)) - V_T(x) = O(1),$$
which yields
\begin{equation*}
c_0 = \lim_{T\to \infty} \frac{1}{T} J_T(x; \bar u(\cd)) = \lim_{T\to \infty} \frac{1}{T} V_{T}(x) = \bar c
\end{equation*}
by using \eqref{eq:ergodic_constant} in Theorem \ref{t:ergodic_control_problem} (ii).
\end{proof}


\bibliographystyle{plain}
\bibliography{reference}

\end{document}